\theoremstyle{plain}
\newcommand{\newreptheorem}[2]{\newtheorem*{rep@#1}{\rep@title}\newenvironment{rep#1}[1]{\def\rep@title{#2 \ref*{##1}}\begin{rep@#1}}{\end{rep@#1}}}
\newtheorem{theorem}{Theorem}[section]
\newtheorem{lemma}[theorem]{Lemma}
\newtheorem{corollary}[theorem]{Corollary}
\newtheorem{proposition}[theorem]{Proposition}
\theoremstyle{definition}
\newtheorem{definition}[theorem]{Definition}
\title{A novel connection between integral binary quadratic forms and knot polynomials}
\author{Amitesh Datta}
\begin{document}
\maketitle

\abstract{We establish a novel connection between algebraic number theory and knot theory. We show that the number of equivalence classes of integral binary quadratic forms of discriminant $t^2 - 4$ (for $t\neq \pm 2$) is equal to the number of isotopy classes of links in $\mathbb{S}^3$ with prescribed values (depending on $t$) of three classical link invariants. The equality arises from a natural algebraic correspondence between integral binary quadratic forms (of discriminant $t^2 - 4$ for $t\neq \pm 2$) and isotopy classes of links of braid index at most three. In particular, the class numbers of certain quadratic number fields precisely measure the failure of the Alexander/Jones polynomial to distinguish non-isotopic links of braid index at most three.}

\tableofcontents
\section{Introduction}
In this paper, we use the algebraic structure of the braid group $B_3$ to establish a fundamental connection between the algebraic number theory of quadratic number fields and knot theory in $\mathbb{S}^3$. Firstly, we develop a natural algebraic correspondence between integral binary quadratic forms (of discriminant $t^2 - 4$ for $t\neq \pm 2$) and isotopy classes of links of braid index at most three. Secondly, we use this correspondence to show that the number of integral binary quadratic forms of discriminant $t^2 - 4$ (for some integer $t\neq \pm 2$) is equal to the number of isotopy classes of links in $\mathbb{S}^3$ with prescribed values (depending on $t$) of three classical link invariants (the Alexander/Jones polynomial, the braid index, and the writhe of certain link diagrams).

The connection developed in this paper is a logical bridge between number theory and topology. Firstly, results in number theory can be established using topology (e.g., one can establish lower bounds on the sizes of ideal class groups of certain quadratic number fields by producing non-isotopic links with the same Alexander/Jones polynomial). Secondly, results in topology can be established using number theory (e.g., one can produce non-isotopic links by producing inequivalent ideal classes in a quadratic number field).

\subsection{Summary of main results}

We briefly recall terminology before stating the main result. Let $L\subseteq \mathbb{S}^3$ be a link. The Alexander polynomial of $L$ is denoted by $\Delta_{L}\left(q\right)\in \mathbb{Z}\left[\sqrt{q},\sqrt{q^{-1}}\right]$ and the Jones polynomial of $L$ is denoted by $V_{L}\left(q\right)\in \mathbb{Z}\left[\sqrt{q},\sqrt{q^{-1}}\right]$. We use the definitions in~\cite{jones1987hecke}.

The \textit{braid index $b\left(L\right)$ of a link $L$} is the minimal positive integer $m$ such that $L$ is isotopic to the closure of a braid in the braid group $B_m$ on $m$ strands. The braid index of a link is a link invariant and Yamada~\cite{yamadaseifertcircles} proved that the braid index of $L$ is equal to the minimal number of Seifert circles in a link diagram for $L$. 

The \textit{writhe $e\left(D\right)$ of an oriented link diagram $D$} is the signed number of crossings in $D$. The writhe is not a link invariant. However, if $L$ is a link of braid index $m\leq 3$, then the writhe of a diagram for $L$ arising as the closure of a braid in $B_m$ is a link invariant to which we refer as $e\left(L\right)$. (Jones~\cite{jones1987hecke} proposes the possibility that the previous statement is true without restriction on $m$ but this more general statement remains open.) 

The main result of the paper is Theorem~\ref{tmain}, which is a little notationally involved. The following statement is a special case of Theorem~\ref{tmain}, and we state it here for ease of exposition.

\begin{theorem}
\label{tmainconsequence}
Let $t$ and $n$ be integers with $t\neq \pm 2$. Let $h\left(t\right)$ denote the number of equivalence classes of integral binary quadratic forms of discriminant $t^2 - 4$. If $L$ is a link, then we fix $P_L$ to be either the Alexander polynomial of $L$ or the Jones polynomial of $L$. Let $p_{t,n}$ be the number of isotopy classes of links of braid index three such that:
\begin{description}
\item[(i)] We have $e\left(L\right) = n$.
\item[(ii)] The special value $P_L\left(-1\right) = i^{n}\left(t-2\right)$, where $i= \sqrt{-1}$ is the imaginary unit.
\end{description}
If $n< -\left|t-3\right|-11$, then $h\left(t\right) = \sum_{j=0}^{11} p_{t,n+j}$. In general, $h\left(t\right)\geq \sum_{j=0}^{11} p_{t,n+j}$ for all $t$ and $n$.
\end{theorem}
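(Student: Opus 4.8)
The plan is to reduce the statement to a counting problem for conjugacy classes in the braid group $B_3$ and then to analyze the fibers of the homomorphism $\pi\colon B_3 \to SL_2(\mathbb{Z})$ given by the reduced Burau representation specialized at $q = -1$ (with $\sigma_1 \mapsto \left(\begin{smallmatrix}1 & 1\\0 & 1\end{smallmatrix}\right)$ and $\sigma_2 \mapsto \left(\begin{smallmatrix}1 & 0\\-1 & 1\end{smallmatrix}\right)$). Write $z = (\sigma_1\sigma_2)^3$ for the generator of the center $Z(B_3)$, so that $e(z) = 6$ and $\pi(z) = -I$. Using the algebraic correspondence of the paper, I identify isotopy classes of links of braid index at most three with (almost all) conjugacy classes of $B_3$; under this identification the writhe $e(L)$ is the exponent sum of the representing braid, which is a conjugacy invariant because it factors through the abelianization $B_3 \to \mathbb{Z}$.

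First I would translate the two hypotheses into algebraic conditions. Condition (i) says the representing braid $\beta$ has exponent sum $n$. For condition (ii), note that $\pi(\beta) = \bar\beta(-1)$ and that any $M \in SL_2(\mathbb{Z})$ satisfies $\det(I - M) = 2 - \operatorname{tr}(M)$. Combining this with the Burau presentation of the Alexander polynomial of a braid closure --- and, for the Jones polynomial, the classical identity $V_L(-1) = \Delta_L(-1)$ --- a direct computation gives $P_L(-1) \doteq 2 - \operatorname{tr}\!\left(\bar\beta(-1)\right)$, and shows that in the normalization pinned down by the writhe the implicit unit is exactly $-i^{\,n}$ (the factor $i^{\,n}$ arising from the $\pm\sqrt{q}^{\,k}$ ambiguity evaluated at $q = -1$). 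Hence condition (ii) is equivalent to $\operatorname{tr}\!\left(\bar\beta(-1)\right) = t$, i.e.\ $\pi(\beta)$ has trace $t$.

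Next I would invoke Gauss's correspondence sending a form $ax^2 + bxy + cy^2$ of discriminant $t^2 - 4$ to $\left(\begin{smallmatrix}(t-b)/2 & -c\\ a & (t+b)/2\end{smallmatrix}\right)$, which is a bijection between equivalence classes of such forms and conjugacy classes of trace $t$ in $SL_2(\mathbb{Z})$; thus $h(t)$ is the number of trace-$t$ conjugacy classes. The count then rests on the fiber structure of $\pi$. Since $Z(B_3)$ is central and $\ker\pi = \langle z^2\rangle$, for any $\beta, \beta'$ with $\pi(\beta), \pi(\beta')$ conjugate one has $\beta'$ conjugate to $\beta z^{2m}$ for some $m \in \mathbb{Z}$; as $e(\beta z^{2m}) = e(\beta) + 12m$ and $\pi(z^2) = I$, the $B_3$-conjugacy classes lying over a fixed trace-$t$ class are exactly $\{[\beta z^{2m}]\}_{m\in\mathbb{Z}}$, all of trace $t$, with exponent sums forming a single arithmetic progression of common difference $12$. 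Consequently any window $\{n, n+1, \dots, n+11\}$ of twelve consecutive writhe values meets the fiber over each trace-$t$ class in exactly one class, giving $\sum_{j=0}^{11} p_{t,n+j} \le h(t)$; the only possible deficit comes from window representatives whose closures have braid index strictly less than three.

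The main obstacle is upgrading this inequality to the equality valid for $n < -|t-3| - 11$. For this I would locate, within each trace-$t$ fiber, the conjugacy classes whose closures have braid index at most two. These are the destabilizable classes, represented by words such as $\sigma_1^{k}\sigma_2^{\pm 1}$ (closing to $(2,k)$-torus links, unknots and unlinks); a short computation shows that such a representative of trace $t$ occurs only at exponent sum $3 - t$ or $t - 3$. Crucially, multiplication by the full twist $z^2$ raises the braid index back to three, so within a fixed fiber only these few central exponent sums are exceptional. Hence every braid-index-reducing representative has exponent sum at least $-|t-3|$, and once the whole window $\{n,\dots,n+11\}$ lies below this value --- that is, once $n < -|t-3| - 11$ --- every trace-$t$ class contributes a genuine braid-index-three link and the inequality becomes equality. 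The genuinely delicate points are proving that the full-twist translates indeed have braid index exactly three, enumerating all exceptional families (including split links), and invoking the Birman--Menasco analysis of $3$-braids so that the passage between $B_3$-conjugacy classes and isotopy classes of links is the clean bijection the count requires.
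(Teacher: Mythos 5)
Your proposal follows the same architecture as the paper: Burau at $q=-1$ to translate conditions (i)--(ii) into trace-$t$ and exponent-sum conditions on conjugacy classes in $B_3$; the central extension $1\to\langle\Delta^4\rangle\to B_3\to \mathrm{SL}_2\left(\mathbb{Z}\right)\to 1$ to show that a window of twelve consecutive exponent sums meets each fiber of $C\left(B_3\right)\to C\left(\mathrm{SL}_2\left(\mathbb{Z}\right)\right)$ exactly once; the classical bijection between trace-$t$ conjugacy classes and form classes of discriminant $t^2-4$; and Birman--Menasco to pass from conjugacy classes to links. Your inequality $\sum_{j=0}^{11}p_{t,n+j}\le h\left(t\right)$ is sound. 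But there is a genuine gap in your equality argument, stemming from the false claim that ``the only possible deficit comes from window representatives whose closures have braid index strictly less than three.'' The braid-closure map on conjugacy classes is not injective even when restricted to classes whose closures have braid index exactly three: by the Birman--Menasco theorem (cases \textbf{(iii)} and \textbf{(iv)} as quoted in Section~\ref{sBirmanMenasco}), the distinct conjugacy classes $C\left(\Delta^{2k}\sigma_1^{-1}\sigma_2^{u}\sigma_1^{-v}\sigma_2^{w}\right)$ and $C\left(\Delta^{2k}\sigma_1^{-1}\sigma_2^{w}\sigma_1^{-v}\sigma_2^{u}\right)$ (with $u\neq w$, $v\ge 2$, $k\in\{0,1\}$), together with the analogous triples, have the same trace, the same exponent sum, and isotopic closures. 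Concretely, for $\left(u,v,w,k\right)=\left(1,2,3,0\right)$ the two classes $C\left(\sigma_1^{-1}\sigma_2\sigma_1^{-2}\sigma_2^{3}\right)$ and $C\left(\sigma_1^{-1}\sigma_2^{3}\sigma_1^{-2}\sigma_2\right)$ both have trace $20$ and exponent sum $1$, so for $t=20$ they consume two distinct trace-$20$ classes in $\mathrm{SL}_2\left(\mathbb{Z}\right)$ while contributing only one link to the count; this deficit has nothing to do with braid index $\le 2$.

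Because of this, eliminating the low-braid-index classes from the window does not yield equality: you must also show that no such duplicate pair has trace $t$ and exponent sum in $\{n,\dots,n+11\}$ when $n<-\left|t-3\right|-11$. This is precisely what the paper encodes in $M'_{t,n}$ (Definition~\ref{dMnumbers}), Lemma~\ref{lbraidlinkdiscrepancy}, and Proposition~\ref{pMnumbers}: the duplication families satisfy $\epsilon=u+w-v-1+6k$ subject to $\left(-1\right)^{k}t=2+\left(u+w\right)\left(1+v\right)+uvw$ (and similarly for the second family), and a short estimate shows these constraints force $\epsilon\ge -\left|t-3\right|$, so the pairs vanish from the window exactly in the stated range of $n$. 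Once you add this computation (and the verification you already flag, that full-twist translates of the exceptional classes have braid index three), your argument closes and essentially reproduces the paper's proof, which packages both sources of discrepancy---duplicate pairs and low braid index---into the single exact identity $h\left(t\right)=\sum_{j=0}^{11}\left(p_{t,n+j}+M_{t,n+j}\right)$ (Theorem~\ref{tmain}) before specializing to the stated range of $n$.
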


If $L$ is a link of braid index three, then the writhe of $L$ is equal to $n$ and the special value of both the Alexander polynomial and the Jones polynomial at $-1$ is equal to $i^{n}\left(t-2\right)$ for a unique pair of integers $t$ and $n$. In particular, the isotopy class of every link of braid index three contributes to $p_{t,n}$ for a unique pair of integers $t$ and $n$.

Lagrange's theorem on the finiteness of the number of equivalence classes of integral binary quadratic forms of a fixed discriminant is equivalent to the following statement, by means of our main result (Theorem~\ref{tmain}).

\begin{theorem}
\label{tfiniteness}
Let $P\in \mathbb{Z}\left[\sqrt{q},\sqrt{q}^{-1}\right]$. The number of isotopy classes of links in $\mathbb{S}^3$ of braid index at most three with either Alexander polynomial equal to $P$ or Jones polynomial equal to $P$ is finite.
\end{theorem}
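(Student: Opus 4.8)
The plan is to deduce the finiteness for links from the classical finiteness of form classes (Lagrange's theorem) through the dictionary of Theorem~\ref{tmain}. First I would dispose of the links of braid index at most two: these are the unknot together with the $(2,k)$-torus links, realized as closures $\widehat{\sigma_1^{\,k}}$ of powers of the generator of $B_2$ (the case $k=0$ being the two-component unlink). Their Alexander and Jones polynomials have breadth tending to infinity with $|k|$, so only finitely many of them can equal a fixed $P$. It therefore suffices to bound the number of isotopy classes $L$ of braid index exactly three with $P_L = P$.

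For such a link, recall from the discussion following Theorem~\ref{tmainconsequence} that $L$ determines a \emph{unique} pair of integers $(t,n)$ with $t\neq \pm 2$, characterized by $e\left(L\right) = n$ and $P_L\left(-1\right) = i^{n}\left(t-2\right)$. Partitioning the classes $L$ with $P_L = P$ according to their pair $(t,n)$, the number of classes in each part is at most $p_{t,n}$: every class in the part with label $(t,n)$ satisfies $e\left(L\right) = n$ and $P_L\left(-1\right) = i^{n}\left(t-2\right)$, which is exactly (indeed weaker than) the defining constraint of $p_{t,n}$. Taking the window $j = 0,\dots,11$ beginning at $n$, the general inequality of Theorem~\ref{tmainconsequence} gives
\[
p_{t,n} \;\leq\; \sum_{j=0}^{11} p_{t,n+j} \;\leq\; h\left(t\right),
\]
and by Lagrange's theorem $h\left(t\right)$ is finite for every admissible $t$. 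Hence each part is finite, and the whole count is finite provided only finitely many pairs $(t,n)$ arise from links with $P_L = P$.

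Finiteness of the admissible $t$ is immediate: since $\left|i^{n}\right| = 1$, the identity $P\left(-1\right) = P_L\left(-1\right) = i^{n}\left(t-2\right)$ forces $\left|t-2\right| = \left|P\left(-1\right)\right|$, and $P\left(-1\right)\neq 0$ because $t\neq 2$; thus $t$ is one of the at most two integers $2 \pm \left|P\left(-1\right)\right|$. The crux — and the step I expect to be the main obstacle — is to bound the admissible $n$. The point is that the writhe enters the Jones-normalized invariants of Theorem~\ref{tmain} as an overall degree shift: in the Burau/Hecke trace formulas underlying the correspondence, $P_L$ differs from a writhe-independent ``trace'' quantity by an explicit monomial in $\sqrt{q}$ whose exponent is affine in $n$, and it is precisely this monomial that produces the factor $i^{n}$ upon evaluation at $q=-1$. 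Since the $\sqrt{q}$-degree of the trace part is controlled by the conjugacy data (the form of discriminant $t^2-4$) once $t$ is fixed, the minimal and maximal $\sqrt{q}$-degrees of the fixed polynomial $P$ should pin down $n$ up to a bounded ambiguity. I would make this precise by extracting the exact writhe-dependence of the normalization from Jones's formula~\cite{jones1987hecke} specialized to $B_3$, turning ``$P$ is fixed'' into an explicit bound $\left|n\right|\leq C\!\left(\deg P\right)$; absent such a bound, each fixed $t$ admits infinitely many $n$ (the special value alone only fixes $n \bmod 4$), so this is the genuinely load-bearing estimate.

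Combining the three reductions, the links of braid index at most three with $P_L = P$ fall into finitely many parts labelled by $(t,n)$, each of size at most $h\left(t\right) < \infty$, which proves the finiteness. Finally, to justify the assertion that this statement is \emph{equivalent} to Lagrange's theorem, I would run the argument in reverse through Theorem~\ref{tmain}: a fixed discriminant $t^2 - 4$ corresponds, after fixing a compatible writhe, to the braid index three classes measured by the $p_{t,n}$, so an independent finiteness statement for such links would recover the finiteness of $h\left(t\right)$. This reverse implication requires no new ingredients beyond the correspondence and would complete the equivalence.
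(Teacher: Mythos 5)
Your proposal has the same skeleton as the paper's proof: show that only finitely many pairs $(t,n)$ can arise from links with $P_L = P$, then bound the count for each admissible pair by $h\left(t\right)$ via Theorem~\ref{tmain}, which is finite by Lagrange's theorem. The paper phrases the first step as the finiteness of the set $E_P$ of possible writhes, justified only by a parenthetical appeal to ``an analysis of the Burau representation,'' so your admittedly incomplete degree-shift estimate for bounding $n$ is at essentially the same level of rigor as the paper's own argument; your explicit disposal of braid index at most two is a correct step that the paper leaves implicit.

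The genuine gap is the clause ``with $t\neq \pm 2$'' in your second step. The paper's discussion following Theorem~\ref{tmainconsequence} asserts only that a braid-index-three link determines a \emph{unique} pair $(t,n)$; it does not assert $t\neq \pm 2$, and that stronger claim is false. For example, the closure of $\Delta^2$ (the $(3,3)$-torus link) has $\phi\left(\Delta^2\right) = -I$, hence $t = -2$; and the closure of $\sigma_1^k\in B_3$, which is the split union of a $(2,k)$-torus link and an unknot and has braid index three for $\left|k\right|\geq 2$, has $\phi\left(\sigma_1^k\right) = S^k$, hence $t = 2$. For the parts of your partition labelled by $t = \pm 2$, the chain $p_{t,n}\leq \sum_{j=0}^{11}p_{t,n+j}\leq h\left(t\right)$ is unavailable: Theorem~\ref{tmainconsequence} excludes these $t$, and necessarily so, because $\text{SL}_2\left(\mathbb{Z}\right)$ has infinitely many conjugacy classes of trace $\pm 2$ (the parabolic classes of $S^m$, $m\in \mathbb{Z}$, are pairwise non-conjugate).

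This failure is not cosmetic. The braids $\sigma_1^{n-12k}\Delta^{4k}$, $k\in \mathbb{Z}$, all have exponent $n$ and map under $\phi$ to the pairwise non-conjugate matrices $S^{n-12k}$ of trace $2$; since the fibers of the closure map have cardinality at most three (Birman-Menasco) and only finitely many of these conjugacy classes close up to links of braid index less than three, their closures contain infinitely many pairwise non-isotopic braid-index-three links with writhe $n$ and $P_L\left(-1\right) = 0$. In other words $p_{2,n}$ is itself infinite, so the $t = \pm 2$ strata can only be controlled using the full polynomial $P$ (equivalently, the full Burau trace as a Laurent polynomial), not its specialization at $q=-1$; no such argument appears in your proposal. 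Worse, in the Alexander case this stratum can defeat the statement outright: every split link has vanishing Alexander polynomial (in the paper's formula, $\text{tr}\left(\beta_3\left(\sigma_1^k\right)\right) = 1 + \left(-q\right)^k$ makes the numerator vanish identically), so the closures $\overline{\sigma_1^k}$, $\left|k\right|\geq 2$, form an infinite family of non-isotopic links of braid index three all sharing the Alexander polynomial $P = 0$. So your argument needs a separate treatment of the trace-$\pm 2$ braids, at minimum restricted to the Jones polynomial or to $P\neq 0$ --- a caveat which, it should be said, the paper's own two-line proof (which likewise invokes Theorem~\ref{tmain} where it does not apply) also fails to address.
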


We also prove the following remarkable symmetry and periodicity of counts of isotopy classes of links. 

\begin{theorem}
\label{tsymmetry}
Let $p_{t,n}$ be as in the statement of Theorem~\ref{tmainconsequence}. If $n<-\left|t + 3\right|-17$, then $\sum_{j=0}^{11} p_{t,n+j} = \sum_{j=0}^{11} p_{-t,n+6+j}$. If $n<-\left|t-3\right| - 12$, then $p_{t,n} = p_{t,n+12}$.
\end{theorem}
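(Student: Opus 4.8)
The plan is to derive both assertions as formal consequences of Theorem~\ref{tmainconsequence}, using the single number-theoretic input that $h(t)=h(-t)$. This identity is immediate: the discriminant $t^2-4$ is invariant under $t\mapsto -t$, so the set of integral binary quadratic forms of discriminant $t^2-4$ is literally unchanged, and hence so is the count of its equivalence classes.

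For the symmetry statement, I would apply the stable-range identity $\sum_{j=0}^{11}p_{t,n+j}=h(t)$ of Theorem~\ref{tmainconsequence} to both the left-hand parameter $t$ and the right-hand parameter $-t$. Applying it to $-t$ with starting writhe $n+6$ requires $n+6<-\left|(-t)-3\right|-11=-\left|t+3\right|-11$, i.e.\ $n<-\left|t+3\right|-17$, which is exactly the hypothesis; this gives $\sum_{j=0}^{11}p_{-t,n+6+j}=h(-t)$. Applying it to $t$ requires $n<-\left|t-3\right|-11$, and the hypothesis already guarantees this because the reverse triangle inequality gives $\left|t-3\right|-\left|t+3\right|\le \left|(t-3)-(t+3)\right|=6$, whence $-\left|t+3\right|-17\le -\left|t-3\right|-11$. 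Combining the two identities with $h(t)=h(-t)$ yields $\sum_{j=0}^{11}p_{t,n+j}=\sum_{j=0}^{11}p_{-t,n+6+j}$.

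For the periodicity statement, I would telescope two consecutive stable-range windows. Under the hypothesis $n<-\left|t-3\right|-12$, both $n$ and $n+1$ satisfy the condition $m<-\left|t-3\right|-11$ of Theorem~\ref{tmainconsequence}, so
\[
\sum_{j=0}^{11}p_{t,n+j}=h(t)=\sum_{j=0}^{11}p_{t,(n+1)+j}.
\]
All of the terms $p_{t,n+1},\dots,p_{t,n+11}$ appear on both sides, so subtracting leaves $p_{t,n}=p_{t,n+12}$, as desired.

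The only genuine bookkeeping is verifying that both sides of each identity lie simultaneously in the stable range, which I expect to be the sole obstacle and which reduces in each case to the elementary inequality $\left|t-3\right|-\left|t+3\right|\le 6$. Conceptually, the shift by $6$ and the period $12=2\cdot 6$ are not accidental: the full twist $\Delta^2=\left(\sigma_1\sigma_2\right)^3$ generating the center of $B_3$ has exponent sum $6$ and maps to $-I$ under the homological (Burau-at-$-1$) representation, so appending it negates the trace $t$ while adding $6$ to the writhe. I would record this as the structural reason behind the $+6$ reflection, even though the equality of windowed sums proved above does not require it.
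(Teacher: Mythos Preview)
Your proof is correct and follows essentially the same path as the paper: both rely on the single number-theoretic input $h(t)=h(-t)$ together with the stable-range identity $\sum_{j=0}^{11}p_{t,n+j}=h(t)$ from Theorem~\ref{tmainconsequence}, and both reduce the range-checking to the elementary bound $\bigl||t-3|-|t+3|\bigr|\le 6$. The only cosmetic difference is that for the periodicity you telescope two adjacent length-$12$ windows of the identity in Theorem~\ref{tmainconsequence}, whereas the paper's one-line proof points instead to the ingredients behind that theorem, in particular the direct bijection $X_{t,n}\cong X_{t,n+12}$ of Lemma~\ref{lmainconnection}(iii) (which, combined with Lemma~\ref{lbraidlinkdiscrepancy} and Proposition~\ref{pMnumbers}, yields $p_{t,n}=p_{t,n+12}$ without passing through $h(t)$); either route is fine.
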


Let us elaborate on the correspondence between isotopy classes of links and equivalence classes of integral binary quadratic forms that leads to these results. If $t\neq \pm 2$, then we will show that every isotopy class of links $L$ contributing to the count $p_{t,n}$ corresponds to a set $S_L$ of equivalence classes of integral binary quadratic forms of discriminant $t^2-4$, where the cardinality of $S_L$ is at most two. Furthermore, we will show that every equivalence class of integral binary quadratic forms of discriminant $t^2 - 4$ arises as an element of $S_L$ for infinitely many distinct isotopy classes of links $L$ (corresponding to the infinite number of possibilities of the integer $n$ in the statement of Theorem~\ref{tmainconsequence}). Finally, if $L$ and $L'$ are distinct isotopy classes of links with $\left|e\left(L\right) - e\left(L'\right)\right|<12$, then we will show that $S_L\cap S_{L'} = \emptyset$. 

We note the following application of this correspondence that allows one to use topological methods to establish results in number theory, and to use number-theoretic methods to establish results in topology: If one uses finer topological invariants of links compared to the Alexander and Jones polynomial, such as the knot group, knot Floer homology, Khovanov homology etc., then one can use Theorem~\ref{tmainconsequence} to distinguish inequivalent binary quadratic forms of discriminant $t^2 - 4$. Conversely, number-theoretic methods to distinguish inequivalent binary quadratic forms of discriminant $t^2 - 4$ also distinguish non-isotopic links.

\subsection{Outline of the proof}

We outline the proof of Theorem~\ref{tmainconsequence} and simultaneously the organization of the paper. In Section~\ref{sconjugationB3SL2}, we study the short exact sequence \[1\to \left\langle \Delta^4 \right\rangle \to B_3\to \text{SL}_2\left(\mathbb{Z}\right)\to 1,\] where $B_3$ is the braid group and $\Delta^2$ is a generator of the infinite cyclic center of $B_3$. We use the short exact sequence to establish a concrete connection between conjugacy classes in $B_3$ and conjugacy classes in $\text{SL}_2\left(\mathbb{Z}\right)$. In Section~\ref{sLatimerMacDuffee}, we recall that conjugacy classes in $\text{SL}_2\left(\mathbb{Z}\right)$ of trace $t\neq \pm 2$ bijectively correspond to equivalence classes of integral binary quadratic forms with discriminant $t^2 - 4$ (Chowla-Cowles-Cowles~\cite{ChowlaconjugacyclassesSL2}). In Section~\ref{sBirmanMenasco}, we study the natural correspondence between conjugacy classes in $B_3$ and isotopy classes of links of braid index at most three in $\mathbb{S}^3$ via the braid closure operation. We use a precise characterization of this correspondence, and the results in Section~\ref{sconjugationB3SL2}, to establish a fundamental connection between isotopy classes of links of braid index three in $\mathbb{S}^3$ and equivalence classes of integral binary quadratic forms with discriminant $t^2 - 4$ (for $t\neq \pm 2$). 

In Section~\ref{sAlexanderJones}, we discuss the connection between the Alexander/Jones polynomial and the map $B_3\to \text{SL}_2\left(\mathbb{Z}\right)$. Let $L$ be obtained as the braid closure $\overline{g}$ for $g\in B_3$. Let $\beta_3:B_3\to \text{GL}_2\left(\mathbb{Z}\left[q^{\pm 1}\right]\right)$ be the (reduced) Burau representation of the braid group $B_3$, and let $\epsilon:B_3\to \mathbb{Z}$ be the abelianization homomorphism. The Alexander polynomial and the Jones polynomial of $L$ can be derived from the trace of $\beta_3\left(g\right)$ and the exponent sum $\epsilon\left(g\right)$ if $g$ is expressed as a word in the Artin generators. We observe that the special value of $\beta_3$ at $q = -1$ is the map $B_3\to \text{SL}_2\left(\mathbb{Z}\right)$ in the aforementioned short exact sequence, and if $L$ has braid index three, then $\epsilon\left(g\right)$ is equal to the writhe $e\left(L\right)$. Finally, in Section~\ref{sproof} we state and prove Theorem~\ref{tmain}, which is the main result of this paper (of which Theorem~\ref{tmainconsequence} is a special case, and Theorem~\ref{tfiniteness} is a corollary).

The author is presently extending the connection established in the paper to a connection between class groups of higher degree number fields and isotopy classes of arbitrary links in $\mathbb{S}^3$. The extension relies on a deeper study of conjugacy classes in the braid groups and linear representations of the braid groups. 

\section{The braid group $B_3$ and the group $\text{SL}_2\left(\mathbb{Z}\right)$}
\label{sconjugationB3SL2}

In this section, we establish a connection between conjugacy classes in the braid group $B_3$ and conjugacy classes in the group $\text{SL}_2\left(\mathbb{Z}\right)$. The former is connected to isotopy classes of links in $\mathbb{S}^3$ of braid index at most three (via the braid closure operation) and the latter is connected to equivalence classes of integral binary quadratic forms. In particular, the connection established in this section will be the formal basis for the main result of this paper. 

If $G$ is a group, then we use the notation $C\left(G\right)$ to denote the set of conjugacy classes in $G$. The following statement is straightforward.

\begin{proposition}
\label{pformalconjugacy}
If $G\to H$ is a group homomorphism, then the image of a conjugacy class in $G$ is contained in a conjugacy class in $H$. If $G\to H$ is a surjective group homomorphism, then the image of a conjugacy class in $G$ is equal to a conjugacy class in $H$.
\end{proposition}

Proposition~\ref{pformalconjugacy} implies that if $G\to H$ is a surjective group homomorphism, then there is an induced surjective map $C\left(G\right)\to C\left(H\right)$ of sets. We recall that \[B_3 = \left\langle \sigma_1,\sigma_2 : \sigma_1\sigma_2\sigma_1 = \sigma_2\sigma_1\sigma_2 \right\rangle\] is the Artin presentation of the braid group $B_3$, and $\Delta = \sigma_1\sigma_2\sigma_1 = \sigma_2\sigma_1\sigma_2$ is the \textit{Garside element} of $B_3$. The element $\Delta^2$ generates the center of $B_3$. We also recall that \[\text{SL}_2\left(\mathbb{Z}\right) = \left\{\begin{bmatrix} a & b \\ c & d \end{bmatrix}: a,b,c,d\in \mathbb{Z}\text{ and } ad-bc = 1\right\}\] is the group of $2\times 2$ integer matrices with determinant $+1$.  Let \[S = \begin{bmatrix} 1 & 1 \\ 0 & 1 \end{bmatrix},\text{ } T = \begin{bmatrix} 1 & 0 \\ -1 & 1 \end{bmatrix}.\] The following statement is classical.

\begin{proposition}
\label{propfpPSL_2}
We have the following finite presentation \[\text{SL}_2\left(\mathbb{Z}\right) = \left\langle S,T : STS = TST,\text{ }\left(ST\right)^6 = I \right\rangle.\]
\end{proposition}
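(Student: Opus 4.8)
The plan is to show that the abstractly presented group $G = \langle S, T : STS = TST,\ (ST)^6 = I\rangle$ is isomorphic to $\text{SL}_2\left(\mathbb{Z}\right)$ under the assignment of $S$ and $T$ to the two displayed matrices. I would obtain this by first building a surjection $G \to \text{SL}_2\left(\mathbb{Z}\right)$ and then promoting it to an isomorphism, by rewriting the presentation so that it manifestly matches the classical amalgamated-product decomposition of $\text{SL}_2\left(\mathbb{Z}\right)$.

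First I would check that the matrices satisfy the relations: a direct computation gives $ST = \begin{bmatrix} 0 & 1 \\ -1 & 1 \end{bmatrix}$, so $\left(ST\right)^3 = -I$ and hence $\left(ST\right)^6 = I$, and likewise $STS = TST = \begin{bmatrix} 0 & 1 \\ -1 & 0 \end{bmatrix}$. Thus the assignment extends to a homomorphism $\phi : G \to \text{SL}_2\left(\mathbb{Z}\right)$, and $\phi$ is surjective because $S$ together with $T^{-1}$ are the standard elementary matrices, which generate $\text{SL}_2\left(\mathbb{Z}\right)$ by the Euclidean algorithm. The entire content of the proposition is therefore the injectivity of $\phi$, i.e.\ the assertion that the two listed relations already imply every relation among $S$ and $T$.

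To handle injectivity I would change generators to $a = ST$ and $c = STS$. One verifies in $G$ that $S = a^{-1}c$ and $T = c^{-1}a^2$, so $a$ and $c$ generate $G$, and a sequence of Tietze transformations rewrites the presentation in terms of $a$ and $c$. The relation $\left(ST\right)^6 = I$ becomes $a^6 = 1$, and the braid relation yields $c^2 = \left(STS\right)\left(TST\right) = \left(ST\right)^3 = a^3$ (replacing the second copy of $STS$ by $TST$); thus $G \cong H := \langle a, c : a^6 = 1,\ c^2 = a^3\rangle$. Since $c^4 = \left(c^2\right)^2 = a^6 = 1$ is automatic, $H$ is exactly the amalgamated free product $\mathbb{Z}/6 *_{\mathbb{Z}/2} \mathbb{Z}/4$, in which $\langle a\rangle \cong \mathbb{Z}/6$ and $\langle c\rangle \cong \mathbb{Z}/4$ are glued along their unique order-two subgroups $\langle a^3\rangle = \langle c^2\rangle$.

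Finally I would invoke the classical structure theorem that $\text{SL}_2\left(\mathbb{Z}\right)$ is itself this amalgam: the matrices $ST$ and $STS$ generate cyclic subgroups of orders $6$ and $4$ meeting in $\{\pm I\}$, and $\text{SL}_2\left(\mathbb{Z}\right) = \langle ST\rangle *_{\mathbb{Z}/2} \langle STS\rangle$ (standard from the action of $\text{SL}_2\left(\mathbb{Z}\right)$ on the Farey tessellation, or Bass--Serre theory). This gives an abstract isomorphism $H \cong \text{SL}_2\left(\mathbb{Z}\right)$; combined with $G \cong H$ it follows that $\phi$ is an isomorphism, either by tracking that the composite sends $a^{-1}c \mapsto S$ and $c^{-1}a^2 \mapsto T$, or more cheaply by noting that $\text{SL}_2\left(\mathbb{Z}\right)$ is finitely generated and residually finite, hence Hopfian, so that precomposing the surjection $\phi$ with any abstract isomorphism $\text{SL}_2\left(\mathbb{Z}\right) \to G$ produces a surjective endomorphism of $\text{SL}_2\left(\mathbb{Z}\right)$, which is forced to be bijective, whence $\phi$ is injective. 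The main obstacle is precisely this last structural input---that $\text{SL}_2\left(\mathbb{Z}\right)$ genuinely splits as $\mathbb{Z}/6 *_{\mathbb{Z}/2} \mathbb{Z}/4$ (equivalently, that the amalgam has the expected normal form and no collapsing occurs); everything else is the formal verification of relations and routine generator bookkeeping.
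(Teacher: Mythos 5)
Your proof is correct, but there is no argument in the paper to compare it against: the paper simply labels the presentation classical and supplies no proof, so your write-up provides exactly what the paper omits. Your computations check out ($ST = \begin{bmatrix} 0 & 1 \\ -1 & 1 \end{bmatrix}$, $(ST)^3 = -I$, $STS = TST = \begin{bmatrix} 0 & 1 \\ -1 & 0 \end{bmatrix}$), surjectivity from the elementary matrices $S$ and $T^{-1}$ is standard, and the Tietze step is genuinely reversible: from $c^2 = a^3$ with $a = ST$, $c = STS$ one gets $STS \cdot STS = (ST)^3 = STS\cdot TST$, and cancelling the common prefix $STS$ recovers the braid relation, so $G \cong \left\langle a, c : a^6 = 1,\ c^2 = a^3 \right\rangle \cong \mathbb{Z}/6 *_{\mathbb{Z}/2} \mathbb{Z}/4$ is an honest equivalence of presentations, not just an implication. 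Of your two suggested endings, the Hopfian one is the more robust and I would make it the official argument: it needs only the \emph{abstract} isomorphism $\text{SL}_2\left(\mathbb{Z}\right) \cong \mathbb{Z}/6 *_{\mathbb{Z}/2} \mathbb{Z}/4$, which is how the classical theorem is usually quoted, together with Malcev's theorem (finitely generated linear, hence residually finite, hence Hopfian). The ``tracking'' ending requires the sharper statement that the amalgam decomposition holds for the \emph{specific} subgroups $\left\langle ST\right\rangle$ and $\left\langle STS\right\rangle$; this is true---they are the stabilizers of $e^{i\pi/3}$ and $i$ for the action on the tree dual to the Farey tessellation, since $\left(ST\right)^{-1} = \begin{bmatrix} 1 & -1 \\ 1 & 0 \end{bmatrix}$ is the standard order-$6$ stabilizer generator---but it is convention-dependent and easy to misquote, as many references state the theorem for a differently chosen order-$6$ subgroup (e.g.\ $\left\langle \begin{bmatrix} 0 & -1 \\ 1 & 1 \end{bmatrix}\right\rangle$, which is \emph{not} the subgroup $\left\langle ST\right\rangle$ here).
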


Let us define $\phi:B_3\to \text{SL}_2\left(\mathbb{Z}\right)$ in terms of generators in the Artin presentation of $B_3$ by the rules $\phi\left(\sigma_1\right) = S$ and $\phi\left(\sigma_2\right) = T$. Proposition~\ref{propfpPSL_2} implies that $\phi$ is a well-defined surjective homomorphism.

\begin{proposition}
\label{propB_3PSL_2}
The sequence of maps \[1\to \left\langle \Delta^4 \right\rangle \to B_3\to \text{SL}_2\left(\mathbb{Z}\right)\to 1\] is a short exact sequence.
\end{proposition}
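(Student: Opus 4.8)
The plan is to verify the three conditions for exactness in turn, with essentially all of the content concentrated in the identification $\ker\phi = \langle\Delta^4\rangle$. Exactness at $\text{SL}_2\left(\mathbb{Z}\right)$ is immediate, since the remark following Proposition~\ref{propfpPSL_2} already records that $\phi$ is surjective. Exactness at $\left\langle\Delta^4\right\rangle$ amounts to injectivity of the inclusion, that is, to showing that $\Delta^4$ has infinite order. I would establish this by introducing the exponent-sum homomorphism $\epsilon:B_3\to\mathbb{Z}$ determined by $\sigma_1,\sigma_2\mapsto 1$ (well-defined because the braid relation is exponent-sum balanced); since $\epsilon\left(\Delta\right)=3$, we get $\epsilon\left(\Delta^4\right)=12\neq 0$, so $\Delta^4$ generates an infinite cyclic subgroup and the inclusion is injective.

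The crux is exactness at $B_3$. For the inclusion $\left\langle\Delta^4\right\rangle\subseteq\ker\phi$, I would compute directly that $\phi\left(\Delta\right)=\phi\left(\sigma_1\sigma_2\sigma_1\right)=STS=\begin{bmatrix}0 & 1\\ -1 & 0\end{bmatrix}$, whence $\phi\left(\Delta^2\right)=-I$ and $\phi\left(\Delta^4\right)=I$. This simultaneously confirms that the stated sequence is a chain complex.

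For the reverse inclusion $\ker\phi\subseteq\left\langle\Delta^4\right\rangle$, I would compare the two finite presentations. Under the identification of generators $S\leftrightarrow\sigma_1$, $T\leftrightarrow\sigma_2$ induced by $\phi$, the presentation of $\text{SL}_2\left(\mathbb{Z}\right)$ in Proposition~\ref{propfpPSL_2} is obtained from the Artin presentation of $B_3$ by adjoining the single relator $\left(\sigma_1\sigma_2\right)^6$, the braid relation $STS=TST$ being common to both. By von Dyck's theorem (adjoining relators to a presentation yields the quotient by their normal closure), $\ker\phi$ is precisely the normal closure in $B_3$ of $\left(\sigma_1\sigma_2\right)^6$. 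The Garside identity $\left(\sigma_1\sigma_2\right)^3=\Delta^2$, a one-line computation using the braid relation, gives $\left(\sigma_1\sigma_2\right)^6=\Delta^4$; and since $\Delta^2$ generates the center of $B_3$, the element $\Delta^4$ is central, so its normal closure coincides with the cyclic subgroup $\left\langle\Delta^4\right\rangle$ that it generates. This yields $\ker\phi=\left\langle\Delta^4\right\rangle$ and completes the proof.

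The main obstacle is not any computation but the clean invocation of the presentation-comparison principle: one must confirm that $\phi$ genuinely matches the two generating sets so that von Dyck's theorem applies, and that the extra $\text{SL}_2\left(\mathbb{Z}\right)$-relator is \emph{equal} to $\Delta^4$ (via $\left(\sigma_1\sigma_2\right)^3=\Delta^2$) rather than merely lying in the kernel. Once centrality of $\Delta^4$ collapses its normal closure to $\left\langle\Delta^4\right\rangle$, the remaining verifications are routine matrix arithmetic and the abelianization count.
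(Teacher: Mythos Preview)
Your proof is correct and follows essentially the same approach as the paper's: both arguments compare the Artin presentation of $B_3$ with the presentation of $\text{SL}_2(\mathbb{Z})$ in Proposition~\ref{propfpPSL_2}, identify the additional relator $(ST)^6$ with $\Delta^4$ via $\phi(\Delta)=STS$ and $(\sigma_1\sigma_2)^3=\Delta^2$, and then use centrality of $\Delta^4$ to collapse its normal closure to the cyclic subgroup $\langle\Delta^4\rangle$. The paper's proof is simply terser, leaving the presentation-comparison step and the infinite-order verification implicit, whereas you spell these out explicitly.
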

\begin{proof}
The statement is a consequence of Proposition~\ref{propfpPSL_2}, since $\phi\left(\Delta\right) = STS$. Indeed, $\Delta^4$ is central in $B_3$, and thus the subgroup of $B_3$ normally generated by $\Delta^4$ equals the subgroup of $B_3$ generated by $\Delta^4$.
\end{proof}

We refer to the short exact sequence in Proposition~\ref{propB_3PSL_2} as the \textit{fundamental short exact sequence}. 

\begin{proposition}
\label{pconjugacyconnection}
If $g,h\in B_3$, then the elements $\phi\left(g\right),\phi\left(h\right)\in \text{SL}_2\left(\mathbb{Z}\right)$ are conjugate in $\text{SL}_2\left(\mathbb{Z}\right)$ if and only if the elements $\Delta^{4k}g,h\in B_3$ are conjugate in $B_3$ for some $k\in \mathbb{Z}$. 
\end{proposition}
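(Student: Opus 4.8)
The plan is to prove the biconditional in Proposition~\ref{pconjugacyconnection} by exploiting the fundamental short exact sequence and the structure of the center of $B_3$.
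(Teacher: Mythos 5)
Your proposal names the correct ingredients --- the fundamental short exact sequence and the centrality of $\Delta^4$ --- and these are exactly what the paper's own (equally brief) proof cites. But as written, your submission is an announcement of strategy, not a proof: neither direction of the biconditional is carried out, and the one step that actually requires an idea is never performed. A plan that says ``exploit the short exact sequence'' does not show \emph{how} the sequence is exploited, and for this statement the ``how'' is the entire content.

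Concretely, here is what is missing. The backward direction is routine: if $\Delta^{4k}g = y^{-1}hy$ for some $y\in B_3$ and $k\in\mathbb{Z}$, apply $\phi$ and use $\Delta^4\in\ker\phi$ to conclude $\phi\left(g\right) = \phi\left(y\right)^{-1}\phi\left(h\right)\phi\left(y\right)$. The forward direction is where all three features of the short exact sequence are needed. Suppose $x\phi\left(g\right)x^{-1} = \phi\left(h\right)$ for some $x\in \text{SL}_2\left(\mathbb{Z}\right)$. Surjectivity of $\phi$ (exactness at $\text{SL}_2\left(\mathbb{Z}\right)$) lets you lift the conjugator: $x = \phi\left(y\right)$ for some $y\in B_3$. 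Then $\phi\left(ygy^{-1}h^{-1}\right) = I$, and exactness in the middle identifies the kernel, giving $ygy^{-1}h^{-1} = \Delta^{4k}$ for some $k\in\mathbb{Z}$. Finally, centrality of $\Delta^4$ is what lets you absorb this factor onto $g$: from $ygy^{-1} = \Delta^{4k}h$ you get $y\left(\Delta^{-4k}g\right)y^{-1} = \Delta^{-4k}ygy^{-1} = h$, so $\Delta^{4k'}g$ is conjugate to $h$ with $k' = -k$. Without this lifting-and-absorbing argument, the proposal has no mathematical content beyond naming the tools; supplying it would make your proof complete and identical in approach to the paper's.
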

\begin{proof}
The statement is an immediate consequence of the fundamental short exact sequence, since $\Delta^4$ is central in $B_3$.
\end{proof}

Let $\epsilon:B_3\to \mathbb{Z}$ denote the abelianization homomorphism, which is uniquely defined by the rule $\epsilon\left(\sigma_1\right) = 1 = \epsilon\left(\sigma_2\right)$. If $g\in B_3$ is represented as a word in the generators $\sigma_1$ and $\sigma_2$, then $\epsilon\left(g\right)$ is the exponent sum of $g$. In particular, $\epsilon:B_3\to \mathbb{Z}$ is a class function on $B_3$. We observe that $\epsilon\left(\Delta\right) = 3$ and we record the following consequence of Proposition~\ref{pconjugacyconnection}.

\begin{proposition}
\label{pepsilonconjugacyconnection}
The elements $g,h\in B_3$ are conjugate in $B_3$ if and only if $\phi\left(g\right),\phi\left(h\right)\in \text{SL}_2\left(\mathbb{Z}\right)$ are conjugate in $\text{SL}_2\left(\mathbb{Z}\right)$ and $\epsilon\left(g\right) = \epsilon\left(h\right)$. 
\end{proposition}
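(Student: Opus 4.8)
The plan is to prove the two implications separately, exploiting that both $\phi$ and $\epsilon$ are class functions (a class function on $G$ being a function constant on each conjugacy class of $G$). The forward implication is essentially formal. Suppose $g,h\in B_3$ are conjugate. Since $\phi:B_3\to \text{SL}_2\left(\mathbb{Z}\right)$ is a group homomorphism, Proposition~\ref{pformalconjugacy} gives that $\phi\left(g\right)$ and $\phi\left(h\right)$ lie in a common conjugacy class of $\text{SL}_2\left(\mathbb{Z}\right)$. Moreover, $\epsilon:B_3\to \mathbb{Z}$ is a homomorphism into an abelian group, hence a class function on $B_3$ as already noted in the text, so $\epsilon\left(g\right) = \epsilon\left(h\right)$. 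This disposes of one direction.

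For the reverse implication, I would assume that $\phi\left(g\right)$ and $\phi\left(h\right)$ are conjugate in $\text{SL}_2\left(\mathbb{Z}\right)$ and that $\epsilon\left(g\right) = \epsilon\left(h\right)$, and aim to conclude that $g$ and $h$ are conjugate in $B_3$. The starting point is Proposition~\ref{pconjugacyconnection}, which translates the conjugacy of $\phi\left(g\right)$ and $\phi\left(h\right)$ into the existence of some $k\in \mathbb{Z}$ for which $\Delta^{4k}g$ and $h$ are conjugate in $B_3$. The task is then to show that the hypothesis on $\epsilon$ forces $k = 0$, since $\Delta^{0}g = g$ would then give the desired conjugacy. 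Applying the class function $\epsilon$ to the conjugate elements $\Delta^{4k}g$ and $h$ yields $\epsilon\left(\Delta^{4k}g\right) = \epsilon\left(h\right)$; expanding the left-hand side using the homomorphism property and the recorded value $\epsilon\left(\Delta\right) = 3$ gives $\epsilon\left(\Delta^{4k}g\right) = 4k\,\epsilon\left(\Delta\right) + \epsilon\left(g\right) = 12k + \epsilon\left(g\right)$. Combining this with $\epsilon\left(g\right) = \epsilon\left(h\right)$ produces $12k = 0$, and since $\mathbb{Z}$ is torsion-free this forces $k = 0$, completing the argument.

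The only conceptual point — and the step I would flag as the crux rather than a genuine obstacle — is the role played by the exponent-sum hypothesis. Proposition~\ref{pconjugacyconnection} controls conjugacy in $B_3$ only up to an unknown central factor $\Delta^{4k}$, reflecting the kernel $\left\langle \Delta^4\right\rangle$ of the fundamental short exact sequence; the condition $\epsilon\left(g\right) = \epsilon\left(h\right)$ is exactly what is needed to eliminate this ambiguity, because $\epsilon$ restricts to the injective map $k\mapsto 12k$ on $\left\langle \Delta^4\right\rangle$. In other words, $\epsilon$ detects the central $\mathbb{Z}$-factor faithfully, so pairing the $\text{SL}_2\left(\mathbb{Z}\right)$-conjugacy data with the exponent sum recovers full $B_3$-conjugacy. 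No further case analysis or computation is required.
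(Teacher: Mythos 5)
Your proposal is correct and follows essentially the same route as the paper: both reduce the statement to Proposition~\ref{pconjugacyconnection} and then use the class function $\epsilon$, together with $\epsilon\left(\Delta^{4k}g\right) = 12k + \epsilon\left(g\right)$, to force the central factor $\Delta^{4k}$ to be trivial. The only difference is presentational — you separate the two implications explicitly, while the paper compresses them into a single chain of equivalences.
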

\begin{proof}
Let us assume that $\Delta^{4k}g,h\in B_3$ are conjugate in $B_3$ for some $k\in \mathbb{Z}$. In this case, we have $\epsilon\left(\Delta^{4k}g\right) = \epsilon\left(h\right)$. In particular, $\epsilon\left(g\right) = \epsilon\left(h\right)$ if and only if $k=0$ and $g,h\in B_3$ are conjugate in $B_3$. The statement is now a consequence of Proposition~\ref{pconjugacyconnection}.
\end{proof}

Proposition~\ref{pepsilonconjugacyconnection} implies that the fibers of the induced surjective map $\phi_{\ast}:C\left(B_3\right)\to C\left(\text{SL}_2\left(\mathbb{Z}\right)\right)$ are parametrized by $\mathbb{Z}$. In fact, if $c\in C\left(B_3\right)$, and if $\Delta^{4k}c\in C\left(B_3\right)$ is defined to be $\{\Delta^{4k}g:g\in c\}$, then Proposition~\ref{pconjugacyconnection} implies that $\{\Delta^{4k}c\}_{k\in \mathbb{Z}}$ is the fiber of $\phi_{\ast}:C\left(B_3\right)\to C\left(\text{SL}_2\left(\mathbb{Z}\right)\right)$ containing $c$. 

The \textit{trace map} $\text{tr}:\text{SL}_2\left(\mathbb{Z}\right)\to \mathbb{Z}$ lifts to a \textit{trace map} $\text{tr}:B_3\to \mathbb{Z}$ via the map $\phi:B_3\to \text{SL}_2\left(\mathbb{Z}\right)$. The trace maps on $\text{SL}_2\left(\mathbb{Z}\right)$ and $B_3$ are class functions on $\text{SL}_2\left(\mathbb{Z}\right)$ and $B_3$. In particular, \textit{the trace of a conjugacy class} in either group is well-defined. The \textit{exponent of a conjugacy class} in $B_3$ is defined to be the exponent sum of any element of the conjugacy class, and it is also well-defined on $B_3$ (but only well-defined modulo $12$ on $\text{SL}_2\left(\mathbb{Z}\right)$).

\begin{definition}
If $t$ and $n$ are integers, then $X_{t,n}$ is the set of conjugacy classes in $B_3$ of trace $t$ and exponent $n$ and $Y_t$ is the set of conjugacy classes in $\text{SL}_2\left(\mathbb{Z}\right)$ of trace $t$.
\end{definition}

The main result of this section is the following set of statements concerning the sets $X_{t,n}$ and $Y_{t}$, as well as the relationship between them via the map $\phi:B_3\to \text{SL}_2\left(\mathbb{Z}\right)$.

\begin{lemma}
\label{lmainconnection}
\begin{description}
\item[(i)] The restriction of $\phi_{\ast}:X_{t,n}\to Y_t$ is injective. 
\item[(ii)] We have compatible bijections $X_{t,n}\cong X_{-t,n+6}$ and $Y_t\cong Y_{-t}$ with respect to $\phi_{\ast}$.
\item[(iii)] We have a bijection $X_{t,n}\cong X_{t,n+12}$ compatible with respect to $\phi_{\ast}$.
\item[(iv)] The restriction $\phi_{\ast}:\coprod_{j=0}^{11} X_{t,n+j}\to Y_t$ is a bijection.
\end{description}
\end{lemma}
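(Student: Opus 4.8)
The plan is to treat parts (i)--(iii) as bookkeeping built on two elementary facts and to reserve the real content for part (iv).

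First I would record the two transformation rules that drive everything. A direct matrix computation gives $\phi\left(\Delta\right) = STS = \begin{bmatrix} 0 & 1 \\ -1 & 0\end{bmatrix}$, hence $\phi\left(\Delta^2\right) = -I$ and $\phi\left(\Delta^4\right) = I$ (the latter also being forced by Proposition~\ref{propB_3PSL_2}). Since the trace map on $B_3$ is $\text{tr} = \text{tr}\circ\phi$ and $\epsilon$ is a homomorphism with $\epsilon\left(\Delta\right) = 3$, multiplication by the central elements $\Delta^2$ and $\Delta^4$ acts on $g\in B_3$ by $\text{tr}\left(\Delta^2 g\right) = -\text{tr}\left(g\right)$, $\epsilon\left(\Delta^2 g\right) = \epsilon\left(g\right) + 6$, and $\text{tr}\left(\Delta^4 g\right) = \text{tr}\left(g\right)$, $\epsilon\left(\Delta^4 g\right) = \epsilon\left(g\right) + 12$. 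Part (i) is then immediate from Proposition~\ref{pepsilonconjugacyconnection}: if $c,c'\in X_{t,n}$ satisfy $\phi_{\ast}\left(c\right) = \phi_{\ast}\left(c'\right)$, then representatives have conjugate images and equal exponent $n$, so they are conjugate in $B_3$, i.e. $c = c'$.

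Next I would construct the bijections in (ii) and (iii) explicitly by multiplication by central elements. Because $\Delta^2$ and $\Delta^4$ are central, the assignments $c\mapsto \Delta^2 c := \{\Delta^2 g : g\in c\}$ and $c\mapsto \Delta^4 c$ send a conjugacy class to a single conjugacy class and are bijections on $C\left(B_3\right)$ with inverses given by $\Delta^{-2}$ and $\Delta^{-4}$. The transformation rules above show $\Delta^2$ carries $X_{t,n}$ onto $X_{-t,n+6}$ and $\Delta^4$ carries $X_{t,n}$ onto $X_{t,n+12}$. On the target side, multiplication by $-I = \phi\left(\Delta^2\right)$ is a bijection $Y_t\to Y_{-t}$, and the identities $\phi\left(\Delta^2 g\right) = -\phi\left(g\right)$ and $\phi\left(\Delta^4 g\right) = \phi\left(g\right)$ are exactly the statements that the relevant squares (resp. the triangle over $Y_t$) commute, giving the asserted compatibility with $\phi_{\ast}$.

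The substance is part (iv), and I would deduce it from the fiber description recorded after Proposition~\ref{pepsilonconjugacyconnection}: the fiber of $\phi_{\ast}:C\left(B_3\right)\to C\left(\text{SL}_2\left(\mathbb{Z}\right)\right)$ through a class $c$ is precisely $\{\Delta^{4k}c\}_{k\in\mathbb{Z}}$, and these classes are pairwise distinct. Fix $d\in Y_t$ and choose any preimage $c_0$ under $\phi_{\ast}$; surjectivity of $\phi_{\ast}$ (hence onto trace-$t$ classes, since $\text{tr}$ factors through $\phi$) guarantees $c_0$ exists and has trace $t$. Every element of the fiber $\{\Delta^{4k}c_0\}_k$ again has trace $t$ because $\phi\left(\Delta^4\right) = I$, and its exponent equals $\epsilon\left(c_0\right) + 12k$. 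Thus the exponents occurring in the fiber form the single residue class $\epsilon\left(c_0\right) \pmod{12}$, each value realized exactly once. Since $\{n, n+1,\dots, n+11\}$ is a complete set of residues modulo $12$, exactly one $k$ places $\Delta^{4k}c_0$ in $\coprod_{j=0}^{11} X_{t,n+j}$; this single class is the unique preimage of $d$ there, proving that $\phi_{\ast}$ restricts to a bijection. The main obstacle---really the one genuinely load-bearing observation---is this interaction between $\ker\phi$ and the exponent: the window has length exactly $12$ precisely because $\epsilon\left(\Delta^4\right) = 12$, so that the generator of $\ker\phi$ shifts the exponent by a full period and the fiber meets each residue class mod $12$ exactly once. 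Establishing $\phi\left(\Delta^2\right) = -I$ (which simultaneously fixes the trace sign-flip in (ii)) and confirming that the central-multiplication maps genuinely descend to well-defined bijections on conjugacy classes are the only other points requiring care, and both are short.
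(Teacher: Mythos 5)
Your proposal is correct and follows essentially the same route as the paper: part (i) from Proposition~\ref{pepsilonconjugacyconnection}, parts (ii) and (iii) via multiplication by the central elements $\Delta^2$ and $\Delta^4$ (using $\phi\left(\Delta^2\right) = -I$), and part (iv) from the surjectivity of $\phi_{\ast}$ together with the fiber description $\{\Delta^{4k}c\}_{k\in\mathbb{Z}}$ coming from Proposition~\ref{pconjugacyconnection}. You simply spell out the exponent-shift-by-$12$ bookkeeping that the paper leaves implicit.
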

\begin{proof}
\begin{description}
\item[(i)] The statement is a consequence of Proposition~\ref{pepsilonconjugacyconnection}.
\item[(ii)] We have $\phi\left(\Delta^2\right) = \begin{bmatrix} -1 & 0 \\ 0 & -1 \end{bmatrix}$. In particular, the map $B_3\to B_3$ of sets defined by multiplication by $\Delta^2$ induces a bijection $X_{t,n}\cong X_{-t,n+6}$ and the map $\text{SL}_2\left(\mathbb{Z}\right)\to \text{SL}_2\left(\mathbb{Z}\right)$ of sets defined by multiplication by $\phi\left(\Delta^2\right)$ induces a bijection $Y_t\cong Y_{-t}$.
\item[(iii)] The element $\Delta^4$ is in the kernel of $\phi$, and the map $B_3\to B_3$ of sets defined by multiplication by $\Delta^4$ induces a bijection $X_{t,n}\cong X_{t,n+12}$.
\item[(iv)] The statement is a consequence of the surjectivity of $\phi_{\ast}:C\left(B_3\right)\to C\left(\text{SL}_2\left(\mathbb{Z}\right)\right)$ and Proposition~\ref{pconjugacyconnection}.
\end{description}
\end{proof}

\section{The group $\text{SL}_2\left(\mathbb{Z}\right)$ and integral binary quadratic forms}
\label{sLatimerMacDuffee}
In this section, we recall the bijective correspondence between conjugacy classes in $\text{SL}_2\left(\mathbb{Z}\right)$ of trace $t$ (the set $Y_t$) and equivalence classes of integral binary quadratic forms of discriminant $t^2 - 4$, if $t\neq \pm 2$. 

An \textit{integral binary quadratic form} is an expression $f\left(x,y\right) = ax^2 + bxy + cy^2$ where $a,b,c\in \mathbb{Z}$. The group $\text{SL}_2\left(\mathbb{Z}\right)$ acts on the set of integral binary quadratic forms by the rule \[\left(\begin{bmatrix} \alpha & \beta \\ \gamma & \delta \end{bmatrix} \cdot f\right)\left(x,y\right) = f\left(\alpha x + \beta y, \gamma x + \delta y\right),\] if $\begin{bmatrix} \alpha & \beta \\ \gamma & \delta \end{bmatrix}\in \text{SL}_2\left(\mathbb{Z}\right)$ and $f\left(x,y\right)$ is an integral binary quadratic form. A pair of integral binary quadratic forms are \textit{equivalent} if they are in the same orbit with respect to this action. The \textit{discriminant of an integral binary quadratic form} $ax^2+bxy+cy^2$ is $b^2-4ac\in \mathbb{Z}$, and it is an invariant of the equivalence class of $ax^2+bxy+cy^2$.

The following result is due to Chowla-Cowles-Cowles~\cite{ChowlaconjugacyclassesSL2}, and it is closely related to a special case of the Latimer-MacDuffee theorem. 

\begin{theorem}[Chowla-Cowles-Cowles]
If $t\neq \pm 2$, then the correspondence \[\begin{bmatrix} a & b \\ c & d \end{bmatrix}\to bx^2 + \left(d-a\right)xy - cy^2\] defines a bijection from the set of conjugacy classes in $\text{SL}_2\left(\mathbb{Z}\right)$ of trace $t$ to the set of equivalence classes of integral binary quadratic forms of discriminant $t^2 - 4$. 
\end{theorem}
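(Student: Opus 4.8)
The plan is to show that the assignment $M=\begin{bmatrix} a & b \\ c & d\end{bmatrix}\mapsto f_M$, with $f_M(x,y)=bx^2+(d-a)xy-cy^2$, descends to a well-defined bijection on classes, and to do this by converting the substitution (congruence) action on forms into the conjugation action on matrices. First I would record the routine discriminant computation: if $\text{tr}(M)=a+d=t$ and $\det M=ad-bc=1$, then the discriminant of $f_M$ is $(d-a)^2+4bc=(a+d)^2-4(ad-bc)=t^2-4$, so $f_M$ indeed has discriminant $t^2-4$. The hypothesis $t\neq\pm 2$ guarantees $t^2-4\neq 0$, keeping us in the standard nondegenerate regime where $\lambda^2-t\lambda+1$ is irreducible over $\mathbb{Q}$ and the set of form classes is the familiar finite object.

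The key device is the coincidence $\text{SL}_2=\text{Sp}_2$. Writing a form $f=\alpha x^2+\beta xy+\gamma y^2$ as $f(x,y)=(x,y)\,Q_f\,(x,y)^{T}$ with $Q_f=\begin{bmatrix}\alpha & \beta/2 \\ \beta/2 & \gamma\end{bmatrix}$, the action in the statement becomes the congruence action $Q_f\mapsto P^{T}Q_fP$ for $P\in\text{SL}_2(\mathbb{Z})$. Setting $J=\begin{bmatrix}0 & 1 \\ -1 & 0\end{bmatrix}$ and using the identity $JP^{T}=P^{-1}J$ (equivalent to $P^{T}JP=J$, the defining property of $\text{Sp}_2$) valid for every $P\in\text{SL}_2(\mathbb{Z})$, I would show that the integral trace-zero matrix $A_f:=2JQ_f=\begin{bmatrix}\beta & 2\gamma \\ -2\alpha & -\beta\end{bmatrix}$ transforms by conjugation, namely $A_{P\cdot f}=P^{-1}A_fP$. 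Since $f\mapsto A_f$ is visibly injective, this already identifies equivalence classes of forms with conjugacy classes of the matrices $A_f$.

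I would then connect $A_f$ to $M$. A direct check gives $A_{f_M}=(tI-2M)^{T}$, equivalently $M=\tfrac12\bigl(tI-A_{f_M}^{\,T}\bigr)$, so that (the trace $t$ being fixed) the theorem reduces to matching the two conjugation actions. Concretely, suppose $f_{M'}=Q\cdot f_M$ with $Q\in\text{SL}_2(\mathbb{Z})$; the equivariance of the previous step gives $A_{f_{M'}}=Q^{-1}A_{f_M}Q$, and transposing this identity (transposition being an anti-automorphism of $\text{SL}_2(\mathbb{Z})$ that sends conjugation by $Q$ to conjugation by $Q^{T}$) yields $tI-2M'=Q^{T}(tI-2M)(Q^{T})^{-1}$, hence $M'=Q^{T}M(Q^{T})^{-1}$. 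Reversing the implications shows conjugate matrices give equivalent forms, so well-definedness and injectivity follow together. For surjectivity I would use the explicit inverse $f\mapsto\tfrac12(tI-A_f^{T})$: the congruence $\beta^2\equiv\beta^2-4\alpha\gamma=t^2-4\equiv t^2\pmod 4$ forces $\beta\equiv t\pmod 2$, so this matrix is integral; one then checks its determinant equals $\tfrac14\bigl(t^2-(\beta^2-4\alpha\gamma)\bigr)=1$, its trace equals $t$, and that it recovers $f$.

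The main obstacle is bookkeeping rather than depth: getting the equivariance exactly right, since the naive correspondence relates form-equivalence to matrix \emph{congruence} rather than conjugation, and the conversion via $J$ introduces a transpose that must be tracked consistently through both the injectivity and the surjectivity arguments. This transpose is ultimately harmless because $Q\in\text{SL}_2(\mathbb{Z})$ if and only if $Q^{T}\in\text{SL}_2(\mathbb{Z})$, so it preserves the conjugacy relation. Conceptually, the content that is genuinely hard for general $n\times n$ matrices -- the Latimer--MacDuffee passage between conjugacy classes and ideal (here, form) classes -- collapses to elementary linear algebra precisely because of the low-dimensional coincidence $\text{SL}_2=\text{Sp}_2$, and the hypothesis $t\neq\pm 2$ enters only to keep the discriminant nonzero so that the form side is the standard nondegenerate object.
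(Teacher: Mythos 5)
The paper does not actually prove this statement anywhere: it is quoted as a known theorem of Chowla--Cowles--Cowles, with a citation in place of a proof. So there is no internal argument to compare against, and your proposal should be judged as a self-contained proof --- and it is correct. The discriminant computation $(d-a)^2+4bc=t^2-4$ is right; the symplectic identity $JP^{T}=P^{-1}J$ for $P\in\mathrm{SL}_2(\mathbb{Z})$ does convert the congruence action $Q_f\mapsto P^{T}Q_fP$ into the conjugation action $A_f\mapsto P^{-1}A_fP$ on $A_f=2JQ_f$; the identity $A_{f_M}=(tI-2M)^{T}$ checks out entry by entry; and the inverse construction $f\mapsto\tfrac12\bigl(tI-A_f^{T}\bigr)$ is integral by the parity argument $\beta\equiv t\pmod 2$, has trace $t$, determinant $1$, and recovers $f$, so surjectivity holds at the level of representatives, not merely classes. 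The transpose bookkeeping in both directions (conjugacy by $Q$ on matrices versus equivalence by $Q^{T}$ on forms) is handled consistently, and it is harmless exactly for the reason you say. One remark worth making explicit: your argument never actually invokes $t\neq\pm 2$ --- well-definedness, injectivity, and surjectivity all go through verbatim for $t=\pm 2$ as well (where both sides become countably infinite sets: unipotent classes versus degenerate forms $nx^2$). The hypothesis matters for how the theorem is \emph{used} in this paper, namely that $h(t)$ is finite and ties to class numbers of quadratic fields, rather than for the bijection itself; your closing sentence attributes the hypothesis to nondegeneracy of the form side, which is the right spirit, but it is cleaner to say the bijection needs no hypothesis at all.
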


Let $h\left(t\right)$ be the number of equivalence classes of integral binary quadratic forms of discriminant $t^2 - 4$. The Chowla-Cowles-Cowles theorem implies that $h\left(t\right) = \left|Y_t\right|$ if $t\neq \pm 2$.

\begin{lemma}
\label{lLatimer-MacDuffee}
If $t\neq \pm 2$ and $n$ are integers, then $h\left(t\right) = \left|\coprod_{j=0}^{11} X_{t,n+j}\right|$.
\end{lemma}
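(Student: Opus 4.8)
The plan is to assemble the lemma directly from the two results already at hand, namely the Chowla-Cowles-Cowles theorem and Lemma~\ref{lmainconnection}(iv); the statement is a formal corollary rather than a computation. First I would invoke the Chowla-Cowles-Cowles theorem, whose bijection between $Y_t$ and the set of equivalence classes of integral binary quadratic forms of discriminant $t^2 - 4$ holds precisely under the hypothesis $t\neq \pm 2$. This yields the equality $h\left(t\right) = \left|Y_t\right|$.

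Next I would apply Lemma~\ref{lmainconnection}(iv), which states that the restriction $\phi_{\ast}:\coprod_{j=0}^{11} X_{t,n+j}\to Y_t$ is a bijection for every integer $n$. Taking cardinalities gives $\left|\coprod_{j=0}^{11} X_{t,n+j}\right| = \left|Y_t\right|$, and chaining this with the previous equality produces $h\left(t\right) = \left|\coprod_{j=0}^{11} X_{t,n+j}\right|$, which is the assertion.

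There is no real obstacle here beyond bookkeeping. The only point worth checking is that the hypothesis $t\neq \pm 2$ is exactly the range in which the Chowla-Cowles-Cowles correspondence is a bijection, and that the right-hand count is genuinely independent of $n$ --- the latter being automatic, since the bijection of Lemma~\ref{lmainconnection}(iv) is available for all $n$, a phenomenon that ultimately reflects the periodicity $X_{t,n}\cong X_{t,n+12}$ of Lemma~\ref{lmainconnection}(iii).
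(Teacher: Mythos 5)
Your proposal is correct and matches the paper's own proof exactly: the paper also derives the lemma by combining the Chowla-Cowles-Cowles bijection (giving $h\left(t\right) = \left|Y_t\right|$ for $t\neq \pm 2$) with the bijection $\phi_{\ast}:\coprod_{j=0}^{11} X_{t,n+j}\to Y_t$ of Lemma~\ref{lmainconnection}~\textbf{(iv)}. Your added remark that the independence of $n$ reflects the periodicity in part~\textbf{(iii)} is a fair observation but not needed, since part~\textbf{(iv)} already holds for every $n$.
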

\begin{proof}
The statement is a consequence of Lemma~\ref{lmainconnection}~\textbf{(iv)} and the Chowla-Cowles-Cowles theorem.
\end{proof} 

In the rest of the paper, we will interpret $\left|\coprod_{j=0}^{11} X_{t,n+j}\right|$ as a count of isotopy classes of links satisfying the conditions in the statement of Theorem~\ref{tmainconsequence} in the Introduction.

\section{The Birman-Menasco classification}
\label{sBirmanMenasco}
Let ${\cal L}_3$ denote the set of isotopy classes of links with braid index at most three. We recall that $C\left(B_3\right)$ denotes the set of conjugacy classes in the braid group $B_3$. We have a map $C\left(B_3\right)\to {\cal L}_3$ induced by the braid closure operation. 

In~\cite{birmanmenascoclosed3braids}, Birman-Menasco determined the fibers of the map $C\left(B_3\right)\to {\cal L}_3$. We will use this result to relate counts of conjugacy classes in the braid group $B_3$ to counts of isotopy classes of links of braid index at most three in $\mathbb{S}^3$. If $g\in B_3$, then we denote the conjugacy class of $g$ by $C\left(g\right)$.

\begin{theorem}[Birman-Menasco]
Let $L\in {\cal L}_3$. The fiber of the map $C\left(B_3\right)\to {\cal L}_3$ over $L$ is a single element except in the following cases:
\begin{description}
\item[(i)] If $L$ is the unknot, then the fiber of $L$ consists of the conjugacy classes $C\left(\sigma_1\sigma_2\right)$, $C\left(\sigma_1\sigma_2^{-1}\right)$, and $C\left(\sigma_1^{-1}\sigma_2^{-1}\right)$.
\item[(ii)] If $L$ is a $\left(2,k\right)$ torus link for $k\neq \pm 1$, then the fiber of $L$ consists of the conjugacy classes $C\left(\sigma_1^{k}\sigma_2\right)$ and $C\left(\sigma_1^{k}\sigma_2^{-1}\right)$. 
\item[(iii)] If $L$ is the braid closure of either \[\Delta^{2k}\sigma_1^{-1}\sigma_2^u\sigma_1^{-v}\sigma_2^{w} \text{ or } \Delta^{2k}\sigma_1^{-1}\sigma_2^{w}\sigma_1^{-v}\sigma_2^{u},\] where $k\in \{0,1\}$ and $\left(u,v,w\right)\in \mathbb{N}^3$ is a triple of positive integers such that $u\neq w$ and $v\geq 2$, then the fiber of $L$ consists of the two distinct conjugacy classes of these braids. 
\item[(iv)] If $L$ is the braid closure of either \[\Delta^{2k}\sigma_1^{-1}\sigma_2^{u}\sigma_1^{-1}\sigma_2^{v}\sigma_1^{-1}\sigma_2^{w} \text{ or } \Delta^{2k}\sigma_1^{-1}\sigma_2^{u}\sigma_1^{-1}\sigma_2^{w}\sigma_1^{-1}\sigma_2^{v},\] where $k\in \{1,2\}$ and $\left(u,v,w\right)\in \mathbb{N}^3$ is a triple of distinct positive integers, then the fiber of $L$ consists of the two distinct conjugacy classes of these braids.
\end{description}
\end{theorem}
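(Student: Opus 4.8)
The plan is to invoke the full machinery of Birman--Menasco's braid foliation theory, beginning from Markov's theorem and refining it to a ``Markov theorem without stabilization'' in the regime of braid index three. Recall that Markov's theorem asserts that two braids (on possibly different numbers of strands) have isotopic closures in $\mathbb{S}^3$ if and only if they are related by a finite sequence of conjugations and stabilization/destabilization moves. Since every $L\in \mathcal{L}_3$ has braid index at most three, I would reduce the problem to understanding which elements of $B_3$, together with braids on one or two strands, close to isotopic links. The essential subtlety is that Markov's theorem permits stabilizations that temporarily increase the strand number, so the first task is to show these excursions can be avoided among $3$-braid representatives; this is precisely the content of the Markov theorem without stabilization for small braid index.

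Second, I would set up the braid foliation associated to the closed braid. The braid axis $A$ and the open-book fibration of $\mathbb{S}^3\setminus A$ by disks induce a singular foliation on an incompressible spanning surface for $L$; the combinatorics of this foliation---its tiles, its elliptic and hyperbolic singularities, and its graph of singular leaves---tightly constrain the isotopies of $L$ that preserve closed-braid form. By carrying out a census of the possible foliations in the braid-index-three case, I would show that any two $3$-braid representatives of a fixed $L$ are connected by a sequence of the following elementary moves: conjugation in $B_3$, destabilization (when the braid index is not genuinely three), exchange moves, and flypes.

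Third, I would analyze the effect of each move on conjugacy classes. Exchange moves on a $3$-braid turn out to be realizable by conjugation, so they produce no new conjugacy classes, and conjugation fixes the conjugacy class by definition. Hence the only moves capable of carrying one conjugacy class to a genuinely distinct conjugacy class with the same closure are the flype and, in degenerate cases, destabilization. Computing the flype directly on the normal form $\Delta^{2k}\sigma_1^{-1}\sigma_2^{u}\sigma_1^{-v}\sigma_2^{w}$ shows that it interchanges the exponents $u$ and $w$, yielding case \textbf{(iii)}; the analogous computation on the six-syllable normal form produces case \textbf{(iv)}. The restrictions on $k$ and on the triple $\left(u,v,w\right)$ (for instance $u\neq w$, $v\geq 2$, and distinctness of the entries) arise from demanding that the two resulting braids be \emph{non}-conjugate, so that the fiber genuinely contains two elements rather than collapsing to one.

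Finally, I would treat the exceptional closures. The unknot and the $\left(2,k\right)$ torus links have braid index strictly less than three, so their multiple $3$-braid representatives come exactly from the freedom in destabilizing: several inequivalent conjugacy classes in $B_3$ collapse onto the same lower-index link, giving the three-element fiber in \textbf{(i)} (the both-positive, mixed-sign, and both-negative stabilizations of the trivial and of $\sigma_1$) and the two-element fibers in \textbf{(ii)} (positive versus negative stabilization of $\sigma_1^{k}$). The main obstacle throughout is the braid-foliation census of the second step: establishing that conjugation, destabilization, exchange moves, and flypes exhaust all isotopies between $3$-braid representatives, and controlling the singular foliations carefully enough to rule out any further independent move, is the deep technical heart of the argument and is where essentially all of the difficulty of the Birman--Menasco classification resides.
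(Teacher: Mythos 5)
This theorem is not proved in the paper at all: it is stated as an imported result, attributed to Birman--Menasco and cited to their work on closed $3$-braids, so there is no internal argument to compare yours against. With that understood, your outline is a faithful reconstruction of how the cited result is actually established in the literature: reduction via Markov's theorem to a ``Markov theorem without stabilization'' in the braid-index-three regime, the braid foliation analysis, the fact that exchange moves on $3$-braids are realizable by conjugation, the flype as the unique source of non-conjugate representatives (interchanging $u$ and $w$ in the normal forms of cases \textbf{(iii)} and \textbf{(iv)}), and the destabilization freedom accounting for the exceptional fibers over the unknot and the $\left(2,k\right)$ torus links in \textbf{(i)} and \textbf{(ii)}. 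Your accounting of the unknot's three classes and the torus links' two classes is also correct: the four sign choices of double stabilization collapse to three conjugacy classes because $\sigma_1\sigma_2^{-1}$ and $\sigma_1^{-1}\sigma_2$ are conjugate in $B_3$.

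Judged as a proof rather than a roadmap, however, your attempt has exactly the gap you yourself flag: the entire mathematical content lies in your second step --- the census showing that conjugation, destabilization, exchange moves, and flypes exhaust all ways two $3$-braids can have isotopic closures. You assert this rather than prove it, and it is not a step that can be compressed: it occupies essentially the whole of the Birman--Menasco paper and its predecessors in their series, requiring delicate control of singular foliations on incompressible surfaces, while everything else in your outline (the effect of a flype on the normal forms, the classes over the unknot, the non-conjugacy conditions on $\left(u,v,w\right)$ and $k$) is routine by comparison. So your proposal should be read as a correct identification of the proof strategy together with a citation-shaped hole where the hard theorem goes --- which is, in effect, precisely what the paper itself does by quoting the result.
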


We compute the traces and exponents of the conjugacy classes in fibers of $C\left(B_3\right)\to {\cal L}_3$ with cardinality greater than one in the following statement.

\begin{proposition}
\label{ptraceexponent}
\begin{description}
\item[(i)] The trace and exponent of the conjugacy class $C\left(\sigma_1\sigma_2\right)$ are $\text{tr} = 1$ and $\epsilon = 2$, respectively. The trace and exponent of the conjugacy class $C\left(\sigma_1\sigma_2^{-1}\right)$ are $\text{tr} = 3$ and $\epsilon = 0$. The trace and exponent of the conjugacy class $C\left(\sigma_1^{-1}\sigma_2^{-1}\right)$ are $\text{tr} = 1$ and $\epsilon = -2$, respectively.
\item[(ii)] The trace and exponent of the conjugacy class $C\left(\sigma_1^{k}\sigma_2\right)$ are $\text{tr} = 2 - k$ and $\epsilon = k+1$. The trace and exponent of the conjugacy class $C\left(\sigma_1^{k}\sigma_2^{-1}\right)$ are $\text{tr} = 2 + k$ and $\epsilon = k-1$. 
\item[(iii)] If $k\in \{0,1\}$ and if $\left(u,v,w\right)\in \mathbb{N}^3$ is a triple of positive integers such that $u\neq w$ and $v\geq 2$, then the trace and exponent of the distinct conjugacy classes $C\left(\Delta^{2k}\sigma_1^{-1}\sigma_2^u\sigma_1^{-v}\sigma_2^{w}\right)$ and $C\left(\Delta^{2k}\sigma_1^{-1}\sigma_2^{w}\sigma_1^{-v}\sigma_2^{u}\right)$ are \begin{align*} \text{tr} &= \left(-1\right)^{k}\left(2+\left(u+w\right)\left(1+v\right)+uvw\right) \\  \epsilon &= u+w-v-1+6k,\end{align*} respectively. 
\item[(iv)] If $k\in \{1,2\}$ and if $\left(u,v,w\right)\in \mathbb{N}^3$ is a triple of distinct positive integers, then the trace and exponent of the distinct conjugacy classes $C\left(\Delta^{2k}\sigma_1^{-1}\sigma_2^u\sigma_1^{-1}\sigma_2^{v}\sigma_1^{-1}\sigma_2^{w}\right)$ and $C\left(\Delta^{2k}\sigma_1^{-1}\sigma_2^{u}\sigma_1^{-1}\sigma_2^{w}\sigma_1^{-1}\sigma_2^{v}\right)$ are \begin{align*}\text{tr} &= \left(-1\right)^{k}(1+u+v+w+u\left(1+v\right)+v\left(1+w\right)+w\left(1+u\right) \\ &+\left(1+u\right)\left(1+v\right)\left(1+w\right)) \\ \epsilon &= u+v+w-3+6k,\end{align*} respectively.
\end{description}
\end{proposition}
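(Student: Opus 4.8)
The plan is to push every braid through the homomorphism $\phi:B_3\to \text{SL}_2\left(\mathbb{Z}\right)$ and read the trace off the resulting matrix, while reading the exponent directly off the word via the abelianization $\epsilon$. The two ingredients are the generator images $\phi\left(\sigma_1\right) = S$ and $\phi\left(\sigma_2\right) = T$, together with the computation $\phi\left(\Delta^2\right) = \begin{bmatrix} -1 & 0 \\ 0 & -1 \end{bmatrix} = -I$ already recorded in the proof of Lemma~\ref{lmainconnection}. Consequently $\phi\left(\Delta^{2k}\right) = \left(-1\right)^k I$ is a central scalar, and it therefore pulls the factor $\left(-1\right)^k$ outside every trace in parts \textbf{(iii)} and \textbf{(iv)}, which accounts for the sign appearing there.

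The exponents require no genuine computation. Since $\epsilon$ is the exponent sum and $\epsilon\left(\Delta\right) = 3$, we have $\epsilon\left(\Delta^{2k}\right) = 6k$; adding the exponent sums of the remaining Artin syllables yields $k+1$, $k-1$, $u+w-v-1+6k$, and $u+v+w-3+6k$ in the four cases by inspection.

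For the traces I would first record the elementary products $S^{k} = \begin{bmatrix} 1 & k \\ 0 & 1 \end{bmatrix}$, $T^{k} = \begin{bmatrix} 1 & 0 \\ -k & 1 \end{bmatrix}$, and the syllable matrix $S^{-1}T^{m} = \begin{bmatrix} 1+m & -1 \\ -m & 1 \end{bmatrix}$. Parts \textbf{(i)} and \textbf{(ii)} are then single two-by-two multiplications. Part \textbf{(iii)} reduces to $\text{tr}\bigl(\left(S^{-1}T^{u}\right)\left(S^{-v}T^{w}\right)\bigr)$, and part \textbf{(iv)} reduces to $\text{tr}\left(A_u A_v A_w\right)$ with $A_m = S^{-1}T^m$; in each case one multiplies out and collects the two diagonal entries. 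The expected outputs are $2 + \left(u+w\right)\left(1+v\right)+uvw$ and the expanded symmetric polynomial of part \textbf{(iv)}, each scaled by $\left(-1\right)^k$.

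The one conceptual point, rather than a genuine obstacle, is explaining why the \emph{two} distinct conjugacy classes in each of \textbf{(iii)} and \textbf{(iv)} carry a single trace value. This is not an accident: the two braids differ precisely by the transposition $u\leftrightarrow w$ (in \textbf{(iii)}) and $v\leftrightarrow w$ (in \textbf{(iv)}), and the trace polynomials are manifestly invariant under those transpositions — indeed the polynomial in \textbf{(iv)} is symmetric in all of $u,v,w$. Thus the computation does double duty: it produces the stated formulas and simultaneously shows that the pair of invariants $\left(\text{tr},\epsilon\right)$ cannot separate the paired classes, which is exactly the coincidence responsible for the later class-number discrepancy. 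The most error-prone (yet entirely mechanical) step is the triple product in \textbf{(iv)}; the reduction to $A_uA_vA_w$ keeps it tractable, and the symmetry of the final answer provides a built-in consistency check.
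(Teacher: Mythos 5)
Your proposal is correct and is essentially the paper's own (implicit) argument: the paper states Proposition~\ref{ptraceexponent} without proof, treating it as the routine computation you carry out, namely pushing each braid through $\phi$, using $\phi\left(\Delta^{2k}\right) = \left(-1\right)^k I$ to extract the sign, and multiplying the syllable matrices $S^{-1}T^m = \begin{bmatrix} 1+m & -1 \\ -m & 1 \end{bmatrix}$, which indeed yields traces $2+\left(u+w\right)\left(1+v\right)+uvw$ in \textbf{(iii)} and $2+3\left(u+v+w\right)+2\left(uv+uw+vw\right)+uvw$ in \textbf{(iv)}, matching the stated formulas. Your remark that the trace polynomials are invariant under the transpositions $u\leftrightarrow w$ and $v\leftrightarrow w$ is also the right justification for why each pair of distinct conjugacy classes carries a single trace value.
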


If $L$ is a link of braid index $m$, then the writhe of a diagram for $L$ arising as the closure of a braid $g\in B_m$ is $\epsilon\left(g\right)$, where $\epsilon:B_m\to \mathbb{Z}$ is the abelianization homomorphism. We derive the following statement using Proposition~\ref{ptraceexponent} and the work of Birman-Menasco.

\begin{corollary}
\label{cwritheinvariant}
If $L$ is a link of braid index $m\leq 3$, then the writhe of a diagram for $L$ arising as the braid closure of a braid in $B_m$ is a link invariant (i.e., it is independent of the choice of such diagram).
\end{corollary}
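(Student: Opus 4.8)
The plan is to reduce the statement to a comparison of exponent sums across the fibers of the braid-closure map and then to read off the answer from Proposition~\ref{ptraceexponent} together with the Birman-Menasco classification. Since the abelianization $\epsilon:B_m\to\mathbb{Z}$ is a class function, the writhe $\epsilon(g)$ of the closure diagram of $g\in B_m$ depends only on the conjugacy class $C(g)$. Hence it suffices to show that whenever two conjugacy classes in $B_m$ have braid closures isotopic to the same link $L$ with $b(L)=m$, those two conjugacy classes have equal exponent. Equivalently, I must verify that $\epsilon$ is constant on each fiber of the induced map $C(B_m)\to\mathcal{L}_m$ lying over a link of braid index exactly $m$.

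For $m=1$ the braid group is trivial and there is nothing to prove, and for $m=2$ the group $B_2=\langle\sigma_1\rangle$ is abelian, so each conjugacy class is a single power $\sigma_1^k$ of exponent $k$; since the closures $\overline{\sigma_1^k}$ are the $(2,k)$ torus links, which are pairwise non-isotopic for $|k|\geq 2$ and give the $2$-component unlink for $k=0$, the assignment $k\mapsto\overline{\sigma_1^k}$ is injective on links of braid index exactly two, and the exponent is determined by $L$. The substantive case is $m=3$, where I would invoke the Birman-Menasco classification directly. If the fiber of $C(B_3)\to\mathcal{L}_3$ over $L$ is a single conjugacy class, then the exponent is automatically well-defined; otherwise the fiber is one of the four exceptional families. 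Since $L$ has braid index exactly three, it is neither the unknot (case (i), braid index one) nor a $(2,k)$ torus link (case (ii), braid index two), so the fiber must be one of the two-element families of cases (iii) and (iv).

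In those two remaining cases I would simply inspect the exponent formulas recorded in Proposition~\ref{ptraceexponent}. In case (iii) the two conjugacy classes are represented by $\Delta^{2k}\sigma_1^{-1}\sigma_2^u\sigma_1^{-v}\sigma_2^w$ and $\Delta^{2k}\sigma_1^{-1}\sigma_2^w\sigma_1^{-v}\sigma_2^u$, which differ only by interchanging $u$ and $w$; their common exponent $u+w-v-1+6k$ is symmetric under this interchange, hence equal. The same symmetry occurs in case (iv), where the two representatives differ by swapping $v$ and $w$ and share the exponent $u+v+w-3+6k$. Thus in every fiber over a braid-index-three link the exponent is constant, which completes the argument.

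I expect the main obstacle to be conceptual rather than computational: one must appreciate why the genuinely non-constant fibers do not contradict the statement. The fibers of cases (i) and (ii) do contain conjugacy classes of differing exponent (for instance $2,0,-2$ for the unknot and $k\pm 1$ for the $(2,k)$ torus link), but these links have braid index strictly less than three, so they are treated at their true braid index $m\in\{1,2\}$, where the relevant braid is determined up to conjugacy and the exponent is unique. This is exactly the phenomenon that forces the hypothesis $b(L)=m$: a single (de)stabilization changes the strand number and shifts the writhe by $\pm 1$, so the writhe is an invariant only at the minimal braid index. Once this point is isolated, the verification itself is immediate from Proposition~\ref{ptraceexponent}.
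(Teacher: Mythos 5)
Your proposal is correct and follows essentially the same route as the paper: a case analysis on $m\in\{1,2,3\}$, with the index-one and index-two cases handled by the triviality of $B_1$ and the classification of closures of $\sigma_1^k$ in the abelian group $B_2$, and the index-three case settled by combining the Birman--Menasco classification of fibers with the exponent formulas of Proposition~\ref{ptraceexponent}. Your write-up merely makes explicit what the paper leaves implicit, namely that the two-element fibers in cases (iii) and (iv) have equal exponent because the formulas are symmetric under the relevant swap of parameters.
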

\begin{proof}
The only link of braid index one is the unknot, and the statement is straightforward in this case. The only links of braid index two are the $\left(2,k\right)$ torus links for $k\neq \pm 1$, and the $\left(2,k\right)$ torus link arises as the braid closure of $\sigma_1^k$ where $B_2 = \left\langle \sigma_1 \right\rangle$ is the braid group on two strands. Furthermore, the $\left(2,k\right)$ torus link is not isotopic to the $\left(2,k'\right)$ torus link for $k\neq k'$. In particular, if $L$ is a $\left(2,k\right)$ torus link for $k\neq \pm 1$, then the writhe of $L$ is equal to $k$. Finally, if $L$ has braid index three, then the statement is a consequence of the Birman-Menasco theorem and Proposition~\ref{ptraceexponent}.
\end{proof}

Let $i = \sqrt{-1}$ be the imaginary unit. The Birman-Menasco theorem implies that we have a well-defined invariant of links ${\cal L}_3\to \mathbb{C}$ defined by $L\to i^{\epsilon\left(g\right)}\left(\text{tr}\left(g\right) - 2\right)$ if $L = \overline{g}\in {\cal L}_3$ for $g\in B_3$. In fact, we will see that this invariant is the special value of either the Alexander polynomial or the Jones polynomial of $L$ at $-1$ in Section~\ref{sAlexanderJones}, and we will suggestively denote it by $P_L\left(-1\right)$.

The next step is to use the Birman-Menasco theorem to explicitly determine the relationship between $X_{t,n}$ and the set of isotopy classes of links in $\mathbb{S}^3$ satisfying the conditions in the statement of Theorem~\ref{tmainconsequence} in the Introduction.

\begin{definition}
\label{dMnumbers}
Let $X'_{t,n}$ be the set of isotopy classes of links of braid index three with $P_L\left(-1\right) = i^n\left(t-2\right)$ and writhe $e\left(L\right) = n$. Let $M_{t,n}'$ be the cardinality of the union of the following two sets:
\begin{align*} \{\left(u,v,w,k\right)\in \mathbb{N}^3\times \{0,1\}:&u\neq w\text{ and }v\geq 2 \\ &n = u+w-v-1+6k\text{ and } \\ &\left(-1\right)^k t = 2+\left(u+w\right)\left(1+v\right)+uvw\} \\
\{\left(u,v,w,k\right)\in \mathbb{N}^3\times \{1,2\}:&u,v,w\text{ are distinct} \\ &n = u + v + w - 3 + 6k\text{ and } \\ &\left(-1\right)^k t = 1+ u + v+ w \\ &+u\left(1+v\right)+ v\left(1+w\right) + w\left(1+u\right) \\&+ \left(1+u\right)\left(1+v\right)\left(1+w\right)\}. \end{align*}
If $t\neq 1,3$ and $n\in \{\pm \left(t-3\right)\}$, then define $M_{t,n} = M_{t,n}' + 1$. If $t = 1$ and $n = \pm 2$, or if $t = 3$ and $n = 0$, then define $M_{t,n} = M_{t,n}' + 1$. In all other cases, define $M_{t,n} = M_{t,n}'$. 
\end{definition}

The definition of $M_{t,n}$ (Definition~\ref{dMnumbers}) is designed precisely so that the following statement is true. 

\begin{lemma}
\label{lbraidlinkdiscrepancy}
If $t$ and $n$ are integers, then $\left|X_{t,n}\right| = \left|X_{t,n}'\right| + M_{t,n}$.
\end{lemma}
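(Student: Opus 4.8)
The plan is to study the braid closure map restricted to $X_{t,n}$, namely the map $b\colon X_{t,n}\to {\cal L}_3$ sending a conjugacy class $c$ to the isotopy class of $\overline{c}$. First I would check that $b$ is well behaved relative to the target sets: for $c\in X_{t,n}$ one has $\text{tr}(c) = t$ and $\epsilon(c) = n$, so $\overline{c}$ has special value $P_{\overline c}(-1) = i^{n}(t-2)$ (the invariant defined just after Proposition~\ref{ptraceexponent}), and if $\overline c$ has braid index exactly three then its writhe equals $\epsilon(c) = n$ by Corollary~\ref{cwritheinvariant}, so that $\overline c\in X'_{t,n}$. Hence $b$ maps $X_{t,n}$ into $X'_{t,n}$ together with the conjugacy classes whose closures are the unknot or a $(2,k)$-torus link, and the whole proof is a bookkeeping of the fibers of $b$ against this target, organized by the Birman--Menasco theorem.

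For the braid-index-three part, the conjugacy classes of $X_{t,n}$ whose closures have braid index three surject onto $X'_{t,n}$, and by the Birman--Menasco theorem each fiber is a single class except over links of types (iii) and (iv), whose fibers have two classes. Thus the number of these conjugacy classes is $|X'_{t,n}|$ plus the number of two-element fibers. I would then match the two-element fibers with the data cut out in Definition~\ref{dMnumbers}: by Proposition~\ref{ptraceexponent}(iii),(iv) the trace-$t$, exponent-$n$ conditions are exactly the equations $n = u+w-v-1+6k$, $(-1)^k t = 2+(u+w)(1+v)+uvw$ and $n = u+v+w-3+6k$, $(-1)^k t = 1+\cdots$, so that $M'_{t,n}$ is assembled from precisely these two systems of equations. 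The subtle point is that several braid words close to a common link: in case (iii) the words with $(u,v,w)$ and $(w,v,u)$ give the two classes of one link, while in case (iv) cyclic rotation of $(u,v,w)$ fixes the conjugacy class (since $\sigma_1^{-1}\sigma_2^u\sigma_1^{-1}\sigma_2^v\sigma_1^{-1}\sigma_2^w$ is conjugate to each of its cyclic shifts) and the two classes of a link are the two rotation orbits. I must account for exactly these swap and rotation symmetries so that each two-element fiber is counted once by $M'_{t,n}$.

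It remains to count the conjugacy classes of $X_{t,n}$ whose closures have braid index at most two; these produce the integer corrections in Definition~\ref{dMnumbers}. By the Birman--Menasco theorem and Proposition~\ref{ptraceexponent}(i),(ii), the unknot contributes a class exactly when $(t,n)\in\{(1,2),(3,0),(1,-2)\}$, and the $(2,k)$-torus link with $k\neq\pm1$ contributes exactly when $(t,n) = (2-k,k+1)$ or $(t,n) = (2+k,k-1)$, i.e. (taking $k = 2-t$ or $k = t-2$) when $t\neq1,3$ and $n\in\{\pm(t-3)\}$. I would verify that these loci are pairwise disjoint and that each supplies a single conjugacy class, so that the number of braid-index-at-most-two classes in $X_{t,n}$ is exactly $M_{t,n}-M'_{t,n}$.

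Adding the three counts yields $|X_{t,n}| = |X'_{t,n}| + (\text{number of two-element fibers}) + (\text{number of braid index} \le 2 \text{ classes}) = |X'_{t,n}| + M_{t,n}$. I expect the \textbf{main obstacle} to be the symmetry bookkeeping of the second paragraph: showing that the conditions on $(u,v,w,k)$ in Definition~\ref{dMnumbers} enumerate the two-element fibers with the correct multiplicity, neither over- nor under-counting the links whose several braid representatives are interchanged by the $u\leftrightarrow w$ swap in case (iii) and by the cyclic symmetry in case (iv). The low-braid-index analysis, by contrast, should reduce to the finite case check that the correction loci are mutually disjoint and that each contributes a single conjugacy class.
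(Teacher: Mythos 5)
Your decomposition is exactly the one in the paper's proof: the paper sets $X_{t,n}^{\circ}\subseteq X_{t,n}$ to be the classes whose closures have braid index three, asserts that the closure map $X_{t,n}^{\circ}\to X_{t,n}'$ is surjective with fibers of size one or two and that the number of two-element fibers is $M_{t,n}'$, and then adds the number of classes closing to links of braid index at most two, observing that Definition~\ref{dMnumbers} makes this correction equal to $M_{t,n}-M_{t,n}'$. Your third paragraph (the unknot and torus-link loci, their disjointness, and the identification with the $+1$ corrections) is a correct and more explicit version of the paper's one-line appeal to Proposition~\ref{ptraceexponent}.

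However, the ``main obstacle'' you flag is not routine bookkeeping that will close up --- it is a genuine discrepancy, and the paper's proof does not address it at all (it states the fiber count as a bare assertion). The defining conditions of both sets in Definition~\ref{dMnumbers} are symmetric: under $u\leftrightarrow w$ in the first set, and under all permutations of $(u,v,w)$ in the second. Hence each type-(iii) fiber accounts for exactly two ordered tuples, namely $(u,v,w,k)$ and $(w,v,u,k)$, distinct because $u\neq w$; and each type-(iv) fiber accounts for exactly six ordered tuples, whose two cyclic orbits (of size three, since $u,v,w$ are distinct and cyclic shifts are conjugate) give the two conjugacy classes in the fiber. So the literal cardinality $M_{t,n}'$ equals twice the number of type-(iii) fibers plus six times the number of type-(iv) fibers, not the number of two-element fibers. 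Concretely, take $(t,n)=(15,0)$: the first set is $\{(1,2,2,0),(2,2,1,0)\}$ and the second is empty, so $M_{15,0}=M_{15,0}'=2$; but these two tuples parametrize the two distinct conjugacy classes $C(\sigma_1^{-1}\sigma_2\sigma_1^{-2}\sigma_2^2)$ and $C(\sigma_1^{-1}\sigma_2^{2}\sigma_1^{-2}\sigma_2)$ lying over a \emph{single} link, so $|X_{15,0}|-|X_{15,0}'|=1\neq 2$. Carried out honestly (``neither over- nor under-counting''), your plan therefore does not verify the lemma as literally stated; what it proves is $|X_{t,n}| = |X_{t,n}'| + \frac{1}{2}|S_1| + \frac{1}{6}|S_2| + \delta$, where $S_1,S_2$ are the two sets of tuples in Definition~\ref{dMnumbers} and $\delta\in\{0,1\}$ is the low-braid-index correction. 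For the lemma to hold, Definition~\ref{dMnumbers} must be amended (or reinterpreted) to count tuples up to these swap and rotation symmetries; the same repair is needed in the paper's own proof.
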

\begin{proof}
The statement is a consequence of the Birman-Menasco theorem and Proposition~\ref{ptraceexponent}. Let $X_{t,n}^{\circ}\subseteq X_{t,n}$ be the subset of $X_{t,n}$ consisting of all conjugacy classes such that their braid closure is a link of braid index three. The braid closure operation induces a surjective map $X_{t,n}^{\circ}\to X_{t,n}'$, where the fibers have cardinality one or two. Furthermore, the number of fibers of cardinality two is equal to $M_{t,n}'$. We deduce that $\left|X_{t,n}^{\circ}\right| = \left|X_{t,n}'\right| + M_{t,n}'$. 

However, $\left|X_{t,n}\right| = \left|X_{t,n}^{\circ}\right| + \epsilon$, where $\epsilon$ is the number of conjugacy classes in $X_{t,n}$ such that their braid closure is a link of braid index one or two. Proposition~\ref{ptraceexponent} implies that $M_{t,n} = M_{t,n}' + \epsilon$ (Definition~\ref{dMnumbers}). Therefore, the statement is established.
\end{proof}

We also observe the following statement.

\begin{proposition}
\label{pMnumbers}
The number $M_{t,n} = 0$ if $n < -\left|t - 3\right|$.
\end{proposition}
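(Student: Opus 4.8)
The plan is to reduce the claim to showing that $M_{t,n}' = 0$, and then to show that each of the two sets whose union defines $M_{t,n}'$ (Definition~\ref{dMnumbers}) is empty under the hypothesis $n < -\left|t-3\right|$.

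First I would dispose of the ``$+1$'' corrections in Definition~\ref{dMnumbers}. Each correction is added only when $n \in \{\pm\left(t-3\right)\}$; the special cases $t=1,\,n=\pm 2$ and $t=3,\,n=0$ are instances of this, since $t-3 = -2$ and $t-3 = 0$ respectively. But $\left|t-3\right| \geq t-3$ and $\left|t-3\right| \geq -\left(t-3\right)$ together give $-\left|t-3\right| \leq \min\{t-3,\,-\left(t-3\right)\}$, so the hypothesis $n < -\left|t-3\right|$ forces $n < t-3$ and $n < -\left(t-3\right)$, whence $n \notin \{\pm\left(t-3\right)\}$. Thus no correction applies and $M_{t,n} = M_{t,n}'$, and it suffices to prove $M_{t,n}' = 0$.

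The core of the argument is a positivity observation. In each of the two defining sets, the trace constraint expresses $\left(-1\right)^k t$ as a polynomial in $u,v,w$ with nonnegative coefficients and positive constant term; under the respective constraints ($u,w\geq 1$, $v\geq 2$ in the first set, and $u,v,w$ distinct positive integers in the second) this forces $\left(-1\right)^k t > 0$, hence $\left(-1\right)^k t = \left|t\right|$. I would then establish the single inequality $n + \left(-1\right)^k t \geq 3$ by substituting both defining equations. For the first set, substituting $n = u+w-v-1+6k$ and $\left(-1\right)^k t = 2 + \left(u+w\right)\left(1+v\right) + uvw$ and collecting terms yields the manifestly positive form $n + \left(-1\right)^k t = 2\left(u+w\right) + 6k + 1 + v\left(u+w-1\right) + uvw \geq 9$. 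Combining $n + \left(-1\right)^k t \geq 3$ with the reverse triangle inequality $\left|t-3\right| \geq \left|t\right| - 3 = \left(-1\right)^k t - 3$ gives $n \geq 3 - \left(-1\right)^k t \geq -\left|t-3\right|$, contradicting $n < -\left|t-3\right|$; so the first set is empty.

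For the second set the inequality $n + \left(-1\right)^k t \geq 3$ holds a fortiori, but in fact the argument there is immediate: since $k\geq 1$ and $u+v+w \geq 1+2+3 = 6$, we have $n = u+v+w-3+6k \geq 9 > 0 \geq -\left|t-3\right|$, again contradicting the hypothesis. The main (though still routine) obstacle is the first set, precisely because there $n = u+w-v-1+6k$ can be negative — the $-v$ term prevents the trivial bound $n\geq 0$ — so one genuinely needs the trace equation, whose $uvw$ and $\left(u+w\right)v$ contributions dominate $v$ and restore positivity of $n+\left(-1\right)^k t$. Assembling the two cases shows both sets are empty, so $M_{t,n}' = 0$ and therefore $M_{t,n} = 0$.
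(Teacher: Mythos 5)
Your proposal is correct and takes essentially the same approach as the paper, whose proof simply asserts that under $n < -\left|t-3\right|$ the corrections vanish (so $M_{t,n} = M_{t,n}'$) and the union of the two defining sets is empty, calling this ``straightforward to check.'' You have supplied exactly that verification: ruling out the $n \in \{\pm\left(t-3\right)\}$ correction cases, proving the bound $n + \left(-1\right)^k t \geq 9$ for the first set via the trace equation, and the bound $n \geq 9$ for the second.
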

\begin{proof}
The number $M_{t,n}'$ is the cardinality of the union of two sets in Definition~\ref{dMnumbers}. If $n<-\left|t-3\right|$, then it is straightforward to check that the union of the two sets is empty and $M_{t,n} = M_{t,n}'$.
\end{proof}

\section{The Alexander and Jones polynomials}
\label{sAlexanderJones}

The (reduced) \textit{Burau representation} $\beta_3:B_3\to \text{GL}_2\left(\mathbb{Z}\left[q^{\pm 1}\right]\right)$ of the braid group $B_3$ is defined by the rule \[\beta_3\left(\sigma_1\right) = \begin{bmatrix} 1 & -q \\ 0 & -q \end{bmatrix} \text{ and } \beta_3\left(\sigma_2\right) = \begin{bmatrix} -q & 0 \\ -1 & 1 \end{bmatrix}.\] We observe that the specialization of $\beta_3$ at $q = -1$ is the map $\phi:B_3\to \text{SL}_2\left(\mathbb{Z}\right)$ in Section~\ref{sconjugationB3SL2}. The other definitions of the Burau representation in the literature differ from ours by a change of basis. 

Let $L\subseteq \mathbb{S}^3$ be a link of braid index at most three, in which case $L$ is isotopic to the braid closure $\overline{g}$ for some $g\in B_3$. In this section, we recall the definitions of the Alexander polynomial of $L$ and the Jones polynomial of $L$ in terms of (the trace of) the Burau matrix $\beta_3\left(g\right)$. We refer the reader to~\cite{jones1987hecke} for further discussion. 

We denote the \textit{Alexander polynomial} of $L$ by $\Delta_{L}\left(q\right)\in \mathbb{Z}\left[\sqrt{q},\sqrt{q^{-1}}\right]$ and we denote the \textit{Jones polynomial} of $L$ by $V_{L}\left(q\right)\in \mathbb{Z}\left[\sqrt{q},\sqrt{q^{-1}}\right]$. If $g\in B_3$ and if $\overline{g}$ is the braid closure of $g$, then we have the following formulas, which can be derived from~\cite{jones1987hecke}: \begin{align*} \Delta_{\overline{g}}\left(q\right) &= \left(-\frac{1}{\sqrt{q}}\right)^{\epsilon\left(g\right)-2} \frac{1-\text{tr}\left(\beta_3\left(g\right)\right) + \left(-q\right)^{\epsilon\left(g\right)}}{1+q+q^2} \\ V_{\overline{g}}\left(q\right) &= \left(\sqrt{q}\right)^{\epsilon\left(g\right)}\left(q + q^{-1} + \text{tr}\left(\beta_3\left(g\right)\right)\right).\end{align*} In particular, both the Alexander polynomial and the Jones polynomial of $\overline{g}$ are determined by $\text{tr}\left(\beta_3\left(g\right)\right)$ and $\epsilon\left(g\right)$. We also have the following equation which can be derived from the previous two equations (a special case of the equation is stated in~\cite{jones1987hecke} when $L=\overline{g}$ is a knot (in which case $\epsilon\left(g\right)$ is even)): \[V_{\overline{g}}\left(q\right) = \sqrt{q}^{\epsilon\left(g\right)}\left[q+q^{-1} + 1 + \left(-q\right)^{\epsilon\left(g\right)} - \left(-\sqrt{q}\right)^{\epsilon\left(g\right)-2}\left(1+q+q^2\right)\Delta_{\overline{g}}\left(q\right)\right].\] The equation gives an interpretation of $\epsilon\left(g\right)$ in terms of known knot invariants, but the equation only determines $\epsilon\left(g\right)$ precisely if $L$ is a link of braid index equal to three. In this case, $\epsilon\left(g\right)$ is equal to the writhe $e\left(L\right)$, and $e\left(L\right)$ is determined by the Alexander polynomial and the Jones polynomial of $L$. (If $L$ is the unknot for example, then $\Delta_L = 1 = V_L$ and $\epsilon\left(g\right)\in \{-2,0,2\}$ is not uniquely determined by $L$.)

The skein relations (or the previous equation) show that $V_{L}\left(-1\right) = \Delta_{L}\left(-1\right)$. The previous equations show that \[V_L\left(-1\right) = i^{e}\left(\text{tr}\left(g\right)-2\right) = \Delta_L\left(-1\right).\] 

We can rewrite the definition of the sets $X_{t,n}'$ using either the Alexander polynomial or the Jones polynomial.

\begin{lemma}
\label{lAlexanderJonestraceexponent}
Let us fix $P_{L}$ to be either the Alexander polynomial of $L$ or the Jones polynomial of $L$. The set $X_{t,n}'$ is the set of all isotopy classes of links of braid index three such that the writhe $e\left(L\right) = n$ and the special value $P_L\left(-1\right) = i^{n}\left(t - 2\right)$. 
\end{lemma}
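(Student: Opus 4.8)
The plan is to recognize that Lemma~\ref{lAlexanderJonestraceexponent} is purely a matter of identifying the combinatorial invariant $P_L(-1)$ used to define $X'_{t,n}$ in Definition~\ref{dMnumbers} with the genuine special values $\Delta_L(-1)$ and $V_L(-1)$. Recall that $X'_{t,n}$ was defined as the set of isotopy classes of links of braid index three satisfying $e(L) = n$ and $P_L(-1) = i^n(t-2)$, where $P_L(-1)$ denotes the Birman-Menasco invariant $i^{\epsilon(g)}(\text{tr}(g) - 2)$ attached to $L = \overline{g}$. Since the writhe condition $e(L) = n$ is common to both descriptions, the only thing to verify is that the condition $P_L(-1) = i^n(t-2)$ on the combinatorial invariant is literally the same as $\Delta_L(-1) = i^n(t-2)$ and as $V_L(-1) = i^n(t-2)$.

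First I would record that for a link $L$ of braid index exactly three written as $L = \overline{g}$, the writhe $e(L)$ equals the exponent sum $\epsilon(g)$, so the first defining condition is unambiguous (this is Corollary~\ref{cwritheinvariant}). Next I would invoke that $\phi: B_3 \to \text{SL}_2(\mathbb{Z})$ is precisely the specialization of the Burau representation $\beta_3$ at $q = -1$; consequently the trace map $\text{tr}: B_3 \to \mathbb{Z}$ of Section~\ref{sconjugationB3SL2}, defined through $\phi$, satisfies $\text{tr}(g) = \text{tr}(\beta_3(g))\big|_{q=-1}$. This is the bridge that lets the combinatorial invariant be read off from the Burau trace at $q = -1$.

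The central step is the evaluation of the displayed formulas for $\Delta_{\overline{g}}(q)$ and $V_{\overline{g}}(q)$ at $q = -1$, which has already been carried out in the excerpt and yields the identity $V_L(-1) = i^{\epsilon(g)}(\text{tr}(g) - 2) = \Delta_L(-1)$. Combining this with the previous step shows that the combinatorial invariant $P_L(-1) = i^{\epsilon(g)}(\text{tr}(g) - 2)$ is simultaneously equal to $\Delta_L(-1)$ and to $V_L(-1)$. Therefore the defining condition $P_L(-1) = i^n(t-2)$ of $X'_{t,n}$ may be replaced verbatim by $\Delta_L(-1) = i^n(t-2)$ or by $V_L(-1) = i^n(t-2)$, and since $e(L) = n$ is retained, this is exactly the assertion of the lemma.

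I do not expect a genuine obstacle here, since the substantive computation (the special-value identity) is already in hand. If one were to reprove that identity from scratch, the only delicate point would be the evaluation of the branch $\sqrt{q}$ at $q = -1$, namely $\sqrt{-1} = i$, together with tracking the resulting power $\left(-1/\sqrt{q}\right)^{\epsilon(g)-2} = i^{\epsilon(g)-2} = -i^{\epsilon(g)}$ in the Alexander formula and confirming that the denominator $1 + q + q^2$ specializes to $1$ (rather than vanishing) at $q = -1$; the Jones formula requires only the analogous evaluations $(\sqrt{q})^{\epsilon(g)} = i^{\epsilon(g)}$ and $q + q^{-1} = -2$. Once these evaluations agree, the lemma follows by unwinding Definition~\ref{dMnumbers}.
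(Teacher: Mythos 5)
Your proposal is correct and follows essentially the same route as the paper: the paper's proof also consists of evaluating the displayed braid-closure formulas for $\Delta_{\overline{g}}(q)$ and $V_{\overline{g}}(q)$ at $q=-1$ (using that $\beta_3$ specializes to $\phi$ there), which the paper carries out in the text immediately preceding the lemma to obtain $V_L(-1) = i^{\epsilon(g)}(\mathrm{tr}(g)-2) = \Delta_L(-1)$, and then unwinding Definition~\ref{dMnumbers}. Your explicit check of the branch evaluations ($\sqrt{-1}=i$, the factor $-i^{\epsilon(g)}$, and the denominator specializing to $1$) is a correct expansion of what the paper leaves as ``immediate.''
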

\begin{proof}
The statement is an immediate consequence of the definitions of the Alexander polynomial and Jones polynomial for the closure of a $3$-braid, recalled above.
\end{proof}

\section{Proof of the main result}
\label{sproof}

We state and prove the main result of this paper.

\begin{theorem}
\label{tmain}
Let $t$ and $n$ be integers with $t\neq \pm 2$. Let $h\left(t\right)$ denote the number of equivalence classes of integral binary quadratic forms of discriminant $t^2 - 4$. If $L$ is a link, then we fix $P_L$ to be either the Alexander polynomial of $L$ or the Jones polynomial of $L$. Let $p_{t,n}$ be as defined in the statement of Theorem~\ref{tmainconsequence} in the Introduction. Let $M_{t,n}$ be as in Definition~\ref{dMnumbers}. We have $h\left(t\right) = \sum_{j=0}^{11} \left(p_{t,n+j} + M_{t,n+j}\right)$.
\end{theorem}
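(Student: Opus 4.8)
The plan is to assemble the identity directly from the three preceding lemmas together with the definition of $p_{t,n}$; essentially all the substance of the argument has already been isolated into those lemmas, so the proof of Theorem~\ref{tmain} amounts to a chain of equalities.

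First I would invoke Lemma~\ref{lLatimer-MacDuffee}, which gives $h\left(t\right) = \left|\coprod_{j=0}^{11} X_{t,n+j}\right|$ for every integer $n$ (under the hypothesis $t\neq \pm 2$). Since the union is disjoint, this equals $h\left(t\right) = \sum_{j=0}^{11} \left|X_{t,n+j}\right|$. This is the step carrying the genuine content: it rests on Lemma~\ref{lmainconnection}~\textbf{(iv)} (the bijection $\phi_{\ast}:\coprod_{j=0}^{11} X_{t,n+j}\to Y_t$ coming from the fundamental short exact sequence) together with the Chowla--Cowles--Cowles identification $h\left(t\right) = \left|Y_t\right|$.

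Second, I would apply Lemma~\ref{lbraidlinkdiscrepancy} termwise, replacing each $\left|X_{t,n+j}\right|$ by $\left|X'_{t,n+j}\right| + M_{t,n+j}$. This absorbs the two sources of discrepancy between conjugacy classes in $B_3$ and isotopy classes of links of braid index exactly three, namely the conjugacy classes whose closures have braid index one or two, and the Birman--Menasco pairs of distinct conjugacy classes with isotopic closures, both of which are packaged into $M_{t,n+j}$.

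Third, I would identify $\left|X'_{t,n+j}\right|$ with $p_{t,n+j}$. By Lemma~\ref{lAlexanderJonestraceexponent}, the set $X'_{t,n}$ is precisely the set of isotopy classes of links of braid index three with $e\left(L\right) = n$ and $P_L\left(-1\right) = i^{n}\left(t-2\right)$, which is exactly the definition of $p_{t,n}$ in Theorem~\ref{tmainconsequence}; hence $\left|X'_{t,n+j}\right| = p_{t,n+j}$. Summing over $j$ from $0$ to $11$ and substituting then yields $h\left(t\right) = \sum_{j=0}^{11}\left(p_{t,n+j} + M_{t,n+j}\right)$, as claimed. The only point requiring care is that Lemma~\ref{lLatimer-MacDuffee} holds for an arbitrary integer $n$, so that the conclusion is valid for all $n$ rather than merely on a restricted range; there is no remaining analytic or combinatorial obstacle, since all the work has been discharged by the earlier lemmas.
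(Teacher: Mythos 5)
Your proposal is correct and follows essentially the same route as the paper's own proof: the identical three-step chain $h\left(t\right) = \left|\coprod_{j=0}^{11} X_{t,n+j}\right| = \left|\coprod_{j=0}^{11} X'_{t,n+j}\right| + \sum_{j=0}^{11} M_{t,n+j} = \sum_{j=0}^{11}\left(p_{t,n+j} + M_{t,n+j}\right)$, justified respectively by Lemma~\ref{lLatimer-MacDuffee}, Lemma~\ref{lbraidlinkdiscrepancy}, and Lemma~\ref{lAlexanderJonestraceexponent}. Your version merely makes explicit the termwise substitutions and the identification $\left|X'_{t,n+j}\right| = p_{t,n+j}$, which the paper leaves implicit.
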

\begin{proof}
If $n$ is an integer, then \begin{align*} h\left(t\right) &= \left|\coprod_{j=0}^{11} X_{t,n+j}\right| \\ &= \left|\coprod_{j=0}^{11} X_{t,n+j}'\right| + \sum_{j=0}^{11} M_{t,n+j} \\ &= \sum_{j=0}^{11} \left(p_{t,n+j} + M_{t,n+j}\right). \end{align*} The first equality is Lemma~\ref{lLatimer-MacDuffee}. The second equality is Lemma~\ref{lbraidlinkdiscrepancy}. The third equality is a consequence of Lemma~\ref{lAlexanderJonestraceexponent}. 
\end{proof}

The statements in the Introduction are corollaries of Theorem~\ref{tmain}.

\begin{proof}[Proof of Theorem~\ref{tmainconsequence}]
The statement is a consequence of Theorem~\ref{tmain} and Proposition~\ref{pMnumbers}.
\end{proof}

\begin{proof}[Proof of Theorem~\ref{tfiniteness}]
If $E_{P}$ is the set of possible writhes of links $L$ such that $P_L = P$, then $E_P$ is finite. (Indeed, this is a consequence of an analysis of the Burau representation $\beta_3$ and the definitions of the Alexander polynomial and the Jones polynomial in Section~\ref{sAlexanderJones}.) Theorem~\ref{tmain} now shows that the statement is equivalent to Lagrange's theorem on the finiteness of $h\left(t\right)$. 
\end{proof}

\begin{proof}[Proof of Theorem~\ref{tsymmetry}]
The statement is an immediate consequence of Theorem~\ref{tmain}, Lemma~\ref{lmainconnection}, Lemma~\ref{lbraidlinkdiscrepancy}, Proposition~\ref{pMnumbers}, and Lemma~\ref{lAlexanderJonestraceexponent}, since $h\left(t\right) = h\left(-t\right)$.
\end{proof}

\bibliography{References}
\bibliographystyle{plain}
\end{document}